\font\smallit=cmti10
\renewcommand\section{\@startsection {section}{1}{\z@}
{-30pt \@plus -1ex \@minus -.2ex}
{2.3ex \@plus.2ex}
{\normalfont\normalsize\bfseries\boldmath}}
\renewcommand\subsection{\@startsection{subsection}{2}{\z@}
{-3.25ex\@plus -1ex \@minus -.2ex}
{1.5ex \@plus .2ex}
{\normalfont\normalsize\bfseries\boldmath}}
\renewcommand{\@seccntformat}[1]{\csname the#1\endcsname. }
\newtheorem{theorem}{Theorem}
\newtheorem{corollary}{Corollary}
\theoremstyle{definition}
\newtheorem{definition}{Definition}
\newtheorem{remark}{Remark}
\newtheorem{example}{Example}
\begin{document}

\begin{center}
\uppercase{\bf Finding Permutiples of a Known Base and Multiplier}
\vskip 20pt
{\bf Benjamin V. Holt}\\
{\smallit Department of Mathematics, Southwestern Oregon Community College, Oregon}\\
{\tt benjamin.holt@socc.edu}\\
\vskip 10pt
\end{center}
\vskip 20pt

\centerline{\bf Abstract}

\noindent
Natural numbers which are nontrivial multiples of some permutation of their base-$b$ digit representations are called permutiples. Specific cases include numbers which are multiples of cyclic permutations (cyclic numbers) and reversals of their digits (palintiples). Previous efforts have produced methods which construct new examples of permutiples with the same set of digits as a known example. Using simple graph-theoretical and finite-state machine constructions, we advance previous work by describing two methods for finding permutiples of a known base and multiplier with no need for known examples or prior knowledge of digits. 

\pagestyle{myheadings}
\thispagestyle{empty}
\baselineskip=12.875pt
\vskip 30pt

\section{Introduction}

A {\it permutiple} is the result of a ``digit-preserving'' multiplication. A bit more formally, a permutiple is a natural number whose representation in some base is an integer multiple of some permutation of its digits \cite{holt_3}. Equivalently, a permutiple can be described as a number which is divisible by a permutation of its digits.

The most well-known cases of permutiples are palintiple numbers \cite{holt_1,holt_2} and cyclic numbers \cite{guttman,kalman}. Palintiples, also known as reverse multiples \cite{kendrick_1,sloane, young_1,young_2}, are multiples of their digit reversals. Two base-10 examples include $87912=4\cdot 21978$ and $98901 = 9 \cdot 10989.$  Cyclic numbers are multiples of cyclic permutations of their digits, such as $714285 = 5 \cdot 142857.$ Cyclic numbers are relatively easy to describe when compared to palintiple numbers, which have a great deal more variety and are much more difficult to characterize. Using the work of Young \cite{young_1,young_2}, Sloane \cite{sloane} developed a graph-theoretical construction, based upon the interactions between the digits and carries, called {\it Young graphs}. Young graphs describe palintiples of any length with a known base and multiplier, and they offer a way to classify palintiples by graph isomorphism. A digit-specific collection of permutiples studied by Qu and Curran \cite{qu} are base-$b$ numbers which are multiples of $(b^{b-1}-1)/(b-1)^2.$ Base-10 examples include $987654312=8 \cdot 123456789$ and $493827156=4 \cdot 123456789.$ 

Using elementary techniques, the author \cite{holt_3, holt_4} develops methods for finding new examples of general permutiples from known examples. For instance, these methods produce new examples, such as $79128=4\cdot 19782,$ from the palintiple example above with the same digits. The results of \cite{holt_4} improve upon the work of \cite{holt_3} by describing a method for finding all permutiples with the same set of digits, multiplier, and length from a single known example.  

This effort develops two methods for finding permutiples of a known base and multiplier. The first method advances the work of \cite{holt_4} by finding all permutiples of a given base, multiplier, and length, but without the requirement of having a known example in advance. All of the above is accomplished by using a basic graph-theoretical construction which will enable us to classify permutiples much more broadly than we could with the notion of permutiple conjugacy, introduced in \cite{holt_4}. Using a modification of the methods of Hoey \cite{hoey} and Sloane \cite{sloane}, we describe a finite-state machine which recognizes a language containing string representations of permutiples. From there, we develop a method for finding permutiples of any acceptable length for a base and multiplier of our choosing. 

\section{Basic Notation, Definitions, and Results}	

For the reader's convenience, we summarize and state the basic definitions and results from \cite{holt_3,holt_4}, which we will be using extensively in the sections which follow. Using $(d_k,d_{k-1},\ldots,d_0)_b$ to denote the 
natural number $\sum_{j=0}^{k}d_j b^j,$ where $0\leq d_j<b$ for all $0\leq j \leq k,$ the following is our definition of a permutiple number.

\begin{definition}[\cite{holt_3}]
Let $1<n<b$ be a natural number and $\sigma$ be a permutation on $\{0,1,2,\ldots, k\}$.
We say that $(d_k, d_{k-1},\ldots, d_0)_b$  is an $(n,b,\sigma)$-\textit{permutiple} provided 
\[
(d_k,d_{k-1},\ldots,d_1, d_0)_b=n(d_{\sigma(k)},d_{\sigma(k-1)},\ldots, d_{\sigma(1)}, d_{\sigma(0)})_b.
\]
When the permutation, $\sigma,$ is not pertinent to the discussion, we may also simply refer to $(d_k,d_{k-1},\ldots,d_0)_b$ as an $(n,b)$-permutiple. We shall refer to the collection of all base-$b$ permutiples having multiplier $n$ as {\it $(n,b)$-permutiples}.
\end{definition}

Coupling this definition with a description of single-digit multiplication gives us the next result.

\begin{theorem}[\cite{holt_3}] 
Let $(d_k, d_{k-1},\ldots, d_0)_b$ be an $(n,b,\sigma)$-permutiple, and let $c_j$ be the $j$th carry. Then,
\[
b c_{j+1}-c_j=nd_{\sigma(j)}-d_{j}
\]
for all $0\leq j \leq k$. 
\label{digits_carries_1}
\end{theorem}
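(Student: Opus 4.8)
The plan is to unwind the ordinary column-by-column (``grade-school'') multiplication algorithm applied to the product $n\cdot(d_{\sigma(k)},d_{\sigma(k-1)},\ldots,d_{\sigma(0)})_b$ and to read off the claimed relation one digit position at a time. First I would make the phrase ``the $j$th carry'' precise: set $c_0=0$ and, for $0\le j\le k$, define $c_{j+1}=\lfloor (nd_{\sigma(j)}+c_j)/b\rfloor$, so that $nd_{\sigma(j)}+c_j=e_j+bc_{j+1}$ with $0\le e_j<b$. The sequences $(c_j)$ and $(e_j)$ are exactly the carries and output digits recorded by the usual multiplication procedure, and they are uniquely determined by $n$, $b$, and the $d_{\sigma(j)}$.

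Next I would check that this procedure really does output the digits $d_j$, that is, $e_j=d_j$ for $0\le j\le k$, together with $c_{k+1}=0$. To do this I would multiply the per-position relation $nd_{\sigma(j)}+c_j=e_j+bc_{j+1}$ by $b^j$ and sum over $j=0,\ldots,k$. The carry terms telescope (here the condition $c_0=0$ is used), giving $n\sum_{j=0}^{k}d_{\sigma(j)}b^{j}=\sum_{j=0}^{k}e_jb^{j}+c_{k+1}b^{k+1}$, i.e.\ $n\cdot(d_{\sigma(k)},\ldots,d_{\sigma(0)})_b=\sum_{j=0}^{k}e_jb^{j}+c_{k+1}b^{k+1}$. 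By the permutiple hypothesis the left-hand side equals $(d_k,\ldots,d_0)_b=\sum_{j=0}^{k}d_jb^{j}$, which is less than $b^{k+1}$; since $c_{k+1}\ge 0$, this forces $c_{k+1}=0$, and then comparing the two base-$b$ expansions $\sum_j e_jb^j=\sum_j d_jb^j$ (all digits lying in $[0,b)$) yields $e_j=d_j$ for every $j$ by uniqueness of base-$b$ representation.

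With $e_j=d_j$ established, the per-position identity $nd_{\sigma(j)}+c_j=d_j+bc_{j+1}$ rearranges immediately to $bc_{j+1}-c_j=nd_{\sigma(j)}-d_j$ for all $0\le j\le k$, which is the assertion.

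The only delicate point I anticipate is the bookkeeping at the two ends of the digit string: justifying $c_0=0$ (no carry into the units place) and, more importantly, $c_{k+1}=0$ (no overflow past the leading position). The telescoping-sum computation above is what cleanly disposes of the latter, so I would present that step carefully rather than appeal informally to the product ``having the same number of digits''; everything else reduces to routine arithmetic.
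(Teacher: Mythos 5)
Your proof is correct and takes essentially the approach the paper intends: Theorem~\ref{digits_carries_1} is quoted from \cite{holt_3} and rests precisely on the column-by-column description of single-digit multiplication, $nd_{\sigma(j)}+c_j=d_j+bc_{j+1}$ with $c_0=0$, which rearranges to the stated identity. Your telescoping argument establishing $e_j=d_j$ and $c_{k+1}=0$ simply makes rigorous the step that the cited source absorbs into the definition of the product's digits and carries, so it is a welcome but not divergent refinement.
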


The next result will also be central to the developments in the following sections.

\begin{theorem}[\cite{holt_3}] \label{carries}
Let $(d_k, d_{k-1},\ldots, d_0)_b$ be an $(n,b,\sigma)$-permutiple, and let $c_j$ be the
$j$th carry. Then, $c_j\leq n-1$ for all $0 \leq j \leq k$.
\end{theorem}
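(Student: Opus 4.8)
The plan is to argue by induction on $j$, leveraging the digit--carry recurrence of Theorem~\ref{digits_carries_1}. First I would record the two facts that come for free from the fact that $(d_k,\ldots,d_0)_b = n(d_{\sigma(k)},\ldots,d_{\sigma(0)})_b$ is a genuine base-$b$ multiplication: each carry $c_j$ is a non-negative integer, and there is no carry into the units position, i.e.\ $c_0=0$. Since $1<n$, this already gives the base case $c_0=0\le n-1$.

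For the inductive step, suppose $c_j\le n-1$ for some $j$ with $0\le j\le k$. Rewriting the identity $bc_{j+1}-c_j=nd_{\sigma(j)}-d_j$ as $bc_{j+1}=nd_{\sigma(j)}-d_j+c_j$ and using $d_j\ge 0$, $d_{\sigma(j)}\le b-1$, and the inductive hypothesis, I would bound
\[
bc_{j+1}\le n(b-1)+c_j\le n(b-1)+(n-1)=nb-1.
\]
Since $c_{j+1}$ is a non-negative integer and $bc_{j+1}\le nb-1<nb$, dividing by $b$ forces $c_{j+1}<n$, hence $c_{j+1}\le n-1$. This closes the induction over the range $0\le j\le k$, which is exactly the range asserted.

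I do not anticipate a genuine obstacle here; the only points requiring care are the bookkeeping at the ends of the digit string---namely that $c_0=0$ supplies the base case and that carries never go negative---together with the observation that the slack $d_j\ge 0$ in the recurrence is precisely what prevents $c_{j+1}$ from reaching $n$. If one prefers not to invoke $c_0=0$ explicitly, the same computation shows the update $c_j\mapsto c_{j+1}$ maps $\{0,1,\ldots,n-1\}$ into itself, so it suffices to know that $c_0$ lies in this set, which again follows from the absence of a carry into the units digit.
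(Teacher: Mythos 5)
Your proof is correct, and it is essentially the standard argument: the paper itself states this theorem without proof (importing it from \cite{holt_3}), but the inductive bound you use, $bc_{j+1}=nd_{\sigma(j)}-d_j+c_j\leq n(b-1)+(n-1)=nb-1$, is exactly the computation the paper records at the start of its finite-state machine section to show the carries stay in $\{0,1,\ldots,n-1\}$. Your handling of the base case $c_0=0$ and the non-negativity of carries is also the right bookkeeping, so there is nothing to correct.
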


If we suppose that $(d_k,d_{k-1},\ldots, d_0)_b$ is an $(n,b,\sigma)$-permutiple, the central problem addressed by \cite{holt_3,holt_4} is to find all permutations, $\pi,$ such that the number given by $(d_{\pi(k)},d_{\pi(k-1)},\ldots, d_{\pi(0)})_b$ is also a permutiple. The notion of permutiple {\it conjugacy} helps us to find and classify these new examples.

\begin{definition}[\cite{holt_3}] \label{conj_class_def}
 Suppose $(d_k, d_{k-1},\ldots, d_0)_b$ is an $(n,b,\sigma)$-permutiple. Then, an $(n,b, \tau_1)$-permutiple, $(d_{\pi_1(k)}, d_{\pi_1(k-1)},\ldots, d_{\pi_1(0)})_b,$ and
 an $(n,b, \tau_2)$-permutiple, $(d_{\pi_2(k)}, d_{\pi_2(k-1)},\ldots, d_{\pi_2(0)})_b$,
  are said to be \textit{conjugate} if $\pi_1 \tau_1 \pi_1^{-1}=\pi_2 \tau_2 \pi_2^{-1}$.
\end{definition}
We note that this definition requires that we treat repeated digits as being distinct from one another. That is, we treat the collection of digits, $\{d_k, d_{k-1},\ldots, d_0\},$ as a multiset.

To make the above definition more concrete, we present the results of the first example of \cite{holt_4}, which uses the digits of a known example, $p=(d_4,d_3,d_2,d_1,d_0)_{10}=(8,7,9,1,2)_{10}=4 \cdot (2,1,9,7,8)_{10},$ to produce all $(4,10,\tau)$-permutiple examples, 
\[
\begin{array}{rl}
(\hat{d}_4,\hat{d}_3,\hat{d}_2,\hat{d}_1,\hat{d}_0)_{10}&=(d_{\pi(4)},d_{\pi(3)},d_{\pi(2)},d_{\pi(1)},d_{\pi(0)})_{10}\\\\
&=4\cdot (\hat{d}_{\tau(4)},\hat{d}_{\tau(3)},\hat{d}_{\tau(2)},\hat{d}_{\tau(1)},\hat{d}_{\tau(0)})_{10},
\end{array}
\]
 in the conjugacy class containing $p.$ Letting $\psi$ be the 5-cycle $(0,1,2,3,4),$ $\rho$ the reversal permutation, and $\varepsilon$ the identity permutation, the conjugacy class of $p$ is given in Table \ref{conj_class_table}.
\begin{center}
\begin{footnotesize}
 \begin{tabular}{|c|c|c|}
\hline $(4,10,\tau)$-Example & $\pi$ & $\tau$ \\\hline
$(8,7,9,1,2)_{10}=4 \cdot (2,1,9,7,8)_{10}$ & $\varepsilon$ & $\rho$  \\\hline
$(8,7,1,9,2)_{10}=4 \cdot (2,1,7,9,8)_{10}$ & $(1,2)$ & $(1,2)\rho(1,2)$ \\\hline
$(7,9,1,2,8)_{10}=4 \cdot (1,9,7,8,2)_{10}$ & $\psi^4$ & $\psi^{-4}\rho\psi^4$ \\\hline
$(7,1,9,2,8)_{10}=4 \cdot (1,7,9,8,2)_{10}$ & $(1,2)\psi^4$ & $\psi^{-4}(1,2)\rho(1,2)\psi^4$ \\\hline
\end{tabular}
\end{footnotesize}
\captionof{table}{The conjugacy class of $p=(8,7,9,1,2)_{10}=4 \cdot (2,1,9,7,8)_{10}.$}\label{conj_class_table}
\end{center}
Clearly, for every $\pi$ and $\tau$ in the above collection, we have that $\pi\tau\pi^{-1}=\rho.$ That is, all of these examples are conjugate permutiples. For details regarding how the above permutiples were obtained, the reader is directed to Example 1 of \cite{holt_4}.

\section{A Graph-Theoretical Approach}

To motivate our next definition, we shall consider the permutiples in the conjugacy class given in Table \ref{conj_class_table}. We notice that if we define a directed graph with all base-10 digits as vertices and $\left(d_j,d_{\tau(j)}\right)$ as edges,  the result is the same for all of these examples, which is shown in Figure \ref{conj_class_graph}.
\begin{center}
\begin{tikzpicture}
\tikzset{edge/.style = {->,> = latex'}}
\tikzset{vertex/.style = {shape=circle,draw,minimum size=1.5em}}
[xscale=3, yscale=3, auto=left,every node/.style={circle,fill=blue!20}]
\node[vertex] (n0) at (12,5) {$0$};
\node[vertex] (n1) at (11.618,6.17557) {$1$};
\node[vertex] (n2) at (10.618,6.90211) {$2$};
\node[vertex] (n3) at (9.38197,6.90211) {$3$};
\node[vertex] (n4) at (8.38197,6.17557) {$4$};
\node[vertex] (n5) at (8,5.00001) {$5$};
\node[vertex] (n6) at (8.38196,3.82443) {$6$};
\node[vertex] (n7) at (9.38196,3.09789) {$7$};
\node[vertex] (n8) at (10.618,3.09788) {$8$};
\node[vertex] (n9) at (11.618,3.82442) {$9$};
\draw[edge, bend right=10] (n8) to (n2);
\draw[edge, bend right=10] (n2) to (n8);
\draw[edge, bend right=10] (n7) to (n1);
\draw[edge, bend right=10] (n1) to (n7);
\draw[edge] (n9) to[loop] (n9);
\end{tikzpicture}
\captionof{figure}{The directed graph which results from taking the collection of ordered pairs $\left\{\left (d_j,d_{\sigma(j)}\right) |\, 0 \leq j \leq 4 \right\}$ from any example in Table \ref{conj_class_table} as directed edges.}
\label{conj_class_graph}
\end{center}
These observations give us an organizing principle which forms one of the bases of the methods of this effort.

\begin{definition}
Let $p=(d_k, d_{k-1},\ldots, d_0)_b$  be an $(n,b,\sigma)$-permutiple. We define a directed graph, called {\it the graph of $p$}, denoted as $G_p,$ to consist of the collection of base-$b$ digits as vertices, and the collection of directed edges $E_p=\left \{\left (d_j,d_{\sigma(j)}\right) |\, 0 \leq j \leq k \right \}.$  A graph, $G,$ for which there is a permutiple, $p,$ such that $G=G_p$ is called a {\it permutiple graph}.
\label{class_graph}
\end{definition}

At this point, we inform the reader that all graphs considered here will be directed graphs, and so, for the remainder of this effort, we shall refer to a ``directed graph'' as simply a ``graph.''

We now develop a condition which narrows down the possible collection of edges of a permutiple graph. Consider a generic $(n,b,\sigma)$-permutiple, $(d_k, d_{k-1},\ldots, d_0)_b.$ By Theorem \ref{digits_carries_1}, we have that $d_j+(b-n)d_{\sigma(j)} \equiv c_j \pmod b$ for all $0\leq j\leq k,$ where $c_j$ is the $j$th carry when multiplying by $n.$ Since $c_j\leq n-1$ by Theorem \ref{carries}, we have that $\lambda(d_j+(b-n)d_{\sigma(j)})\leq n-1$ for all $0\leq j\leq k,$ where $\lambda$ gives the least non-negative residue modulo $b.$ We state this as a result in the present graph-theoretical context.
\begin{theorem}
 Let $p=(d_k, d_{k-1},\ldots, d_0)_b$ be an $(n,b,\sigma)$-permutiple with graph $G_p.$ Then, for any edge, $(d_j,d_{\sigma(j)}),$ of $G_p,$ it must be that $\lambda \left(d_j+(b-n)d_{\sigma(j)}\right)\leq n-1$
for all $0\leq j\leq k,$ where $\lambda$ gives the least non-negative residue modulo $b.$
\end{theorem}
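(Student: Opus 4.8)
The plan is to derive the claimed inequality directly from Theorems \ref{digits_carries_1} and \ref{carries}, since the quantity $\lambda\bigl(d_j+(b-n)d_{\sigma(j)}\bigr)$ is essentially a repackaging of the $j$th carry. First I would take the digit-carry relation $bc_{j+1}-c_j=nd_{\sigma(j)}-d_j$ from Theorem \ref{digits_carries_1} and solve it for $d_j+(b-n)d_{\sigma(j)}$. Writing $(b-n)d_{\sigma(j)}=bd_{\sigma(j)}-nd_{\sigma(j)}$ and substituting $nd_{\sigma(j)}=d_j+bc_{j+1}-c_j$ gives
\[
d_j+(b-n)d_{\sigma(j)}=b\bigl(d_{\sigma(j)}-c_{j+1}\bigr)+c_j,
\]
so that $d_j+(b-n)d_{\sigma(j)}\equiv c_j\pmod b$ for every $0\le j\le k$.

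Next I would invoke Theorem \ref{carries}, together with the fact that carries in the multiplication algorithm are non-negative, to conclude $0\le c_j\le n-1$. Since the hypothesis $1<n<b$ forces $n-1<b$, the value $c_j$ already lies in the set $\{0,1,\ldots,b-1\}$ and is therefore its own least non-negative residue modulo $b$. Combining this with the congruence from the previous step yields $\lambda\bigl(d_j+(b-n)d_{\sigma(j)}\bigr)=c_j\le n-1$, which is exactly the asserted bound; and since the ordered pair $(d_j,d_{\sigma(j)})$ ranges over all edges of $G_p$ as $j$ ranges over $\{0,1,\ldots,k\}$, the edge condition follows.

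There is no substantial obstacle here; the one point that must be handled with a little care is the identification of $\lambda(c_j)$ with $c_j$ itself. This requires knowing that $c_j$ genuinely lies in $[0,b)$: non-negativity is immediate from the definition of a carry, and the upper bound $c_j\le n-1<b$ is precisely the content of Theorem \ref{carries}. Once that is in place the residue computation is unambiguous and the proof is complete.
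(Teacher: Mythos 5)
Your argument is correct and is essentially the paper's own proof: both derive the congruence $d_j+(b-n)d_{\sigma(j)}\equiv c_j \pmod b$ from Theorem \ref{digits_carries_1} and then apply the carry bound $0\le c_j\le n-1<b$ from Theorem \ref{carries} to identify the least non-negative residue with $c_j$. Your explicit rewriting $d_j+(b-n)d_{\sigma(j)}=b\left(d_{\sigma(j)}-c_{j+1}\right)+c_j$ is just a slightly more detailed presentation of the same step.
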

The above gives a list of candidates for the edges of a permutiple graph. Additionally, we note that the above result also follows from Theorem 3 of \cite{holt_4}, taking $\tau=\sigma$ and $\pi$ to be the identity permutation. These considerations motivate our next definition. 

\begin{definition}\label{mother_graph}
The $(n,b)$-\textit{mother graph}, denoted $M,$ is the graph having all base-$b$ digits as its vertices and the collection of edges, $(d_1,d_2),$ satisfying the inequality $\lambda \left(d_1+(b-n)d_{2}\right)\leq n-1.$ 
\end{definition}

Clearly, the graph of any $(n,b)$-permutiple is a subgraph of $M.$ We also notice that for a fixed $d_2,$ there is exactly one possibility for $d_1$ which satisfies the equation
\begin{equation}\label{input_pair}
\lambda \left(d_1+(b-n)d_{2}\right)=c_j.  
\end{equation}
In other words, there are $b$ ordered-pair solutions, $(d_1,d_2),$ to Equation (\ref{input_pair}). Since $0\leq c_j \leq n-1,$ it follows that the inequality in Definition \ref{mother_graph} has $nb$ ordered-pair solutions. This is to say that $M$ has precisely $nb$ edges. The above also shows that the indegree of each vertex is $n.$ Finally, since Equation (\ref{input_pair}) has $b$ solutions, we have that for a fixed value, $d_1,$ there is exactly one value of $d_2$ which solves Equation (\ref{input_pair}). Hence, for each $d_1,$ there are $n$ values of $d_2$ which solve the inequality in Definition \ref{mother_graph}. This shows that the outdegree of each vertex is also $n.$

As might be expected from our initial motivating example, permutiple conjugacy is related to the above ideas; if two $(n,b)$-permutiples, $p_1$ and $p_2,$ are conjugate, then their corresponding graphs are identical. We state this formally as a theorem.

 \begin{theorem}\label{conj_class}
 Suppose $(d_k, d_{k-1},\ldots, d_0)_b$ is an $(n,b,\sigma)$-permutiple.
  If an $(n,b, \tau_1)$-permutiple, $p_1=(d_{\pi_1(k)}, d_{\pi_1(k-1)},\ldots, d_{\pi_1(0)})_b,$ and
 an $(n,b, \tau_2)$-permutiple, $p_2=(d_{\pi_2(k)}, d_{\pi_2(k-1)},\ldots, d_{\pi_2(0)})_b,$ are in the same conjugacy class, then $G_{p_1}=G_{p_2}.$ 
 \end{theorem}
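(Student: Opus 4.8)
The plan is to rewrite the edge sets $E_{p_1}$ and $E_{p_2}$ so that they are expressed purely in terms of the original digits $d_0,\dots,d_k$, and then to invoke the conjugacy hypothesis. First I would write $p_1=(e_k,e_{k-1},\dots,e_0)_b$, so that its $j$th digit is $e_j=d_{\pi_1(j)}$. Since $p_1$ is an $(n,b,\tau_1)$-permutiple, Definition~\ref{class_graph} gives $E_{p_1}=\{(e_j,e_{\tau_1(j)})\mid 0\le j\le k\}=\{(d_{\pi_1(j)},d_{\pi_1\tau_1(j)})\mid 0\le j\le k\}$.

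The key step is the substitution $i=\pi_1(j)$: because $\pi_1$ is a permutation of $\{0,1,\dots,k\}$, as $j$ ranges over this set so does $i$, and $j=\pi_1^{-1}(i)$; hence $E_{p_1}=\{(d_i,d_{\pi_1\tau_1\pi_1^{-1}(i)})\mid 0\le i\le k\}$. Carrying out the identical computation for $p_2$ yields $E_{p_2}=\{(d_i,d_{\pi_2\tau_2\pi_2^{-1}(i)})\mid 0\le i\le k\}$. Then, because $p_1$ and $p_2$ lie in the same conjugacy class, Definition~\ref{conj_class_def} gives $\pi_1\tau_1\pi_1^{-1}=\pi_2\tau_2\pi_2^{-1}$, so the two edge-set descriptions coincide and $E_{p_1}=E_{p_2}$. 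Since $G_{p_1}$ and $G_{p_2}$ have the same vertex set (all base-$b$ digits), it follows that $G_{p_1}=G_{p_2}$.

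I do not anticipate a genuine obstacle here: the argument is essentially index bookkeeping. The one point that needs care is the direction of the reindexing — one must substitute through $\pi_1$ rather than $\pi_1^{-1}$ so that the conjugate $\pi_1\tau_1\pi_1^{-1}$ (and not its inverse) appears — together with the harmless observation that $E_p$ is a set, so that any edges repeated among the $k+1$ indices are automatically identified and the equality of the two collections is literal set equality.
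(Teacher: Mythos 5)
Your argument is correct and is essentially identical to the paper's proof: both compute $E_{p_1}=\{(d_{\pi_1(j)},d_{\pi_1\tau_1(j)})\}$, reindex through $\pi_1$ to obtain $\{(d_i,d_{\pi_1\tau_1\pi_1^{-1}(i)})\}$, and then apply the conjugacy relation $\pi_1\tau_1\pi_1^{-1}=\pi_2\tau_2\pi_2^{-1}$ to conclude $E_{p_1}=E_{p_2}$, hence $G_{p_1}=G_{p_2}$. Your remarks on the direction of the reindexing and on treating $E_p$ as a set are sound and match the paper's handling.
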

 
\begin{proof}
It is sufficient to show that $E_{p_1}=E_{p_2}.$ So,
\[
\begin{array}{lll}
 E_{p_1}&= \left\{\left(d_{\pi_1(j)},d_{\pi_1 \tau_1(j)}\right)\,|\, 0 \leq j \leq k\right\}&\\\\
 &= \left\{\left(d_j,d_{\pi_1 \tau_1 \pi_1^{-1}(j)}\right)\,|\, 0 \leq j \leq k\right\}&\\\\
&= \left\{\left(d_j,d_{\pi_2 \tau_2 \pi_2^{-1}(j)}\right)\,|\, 0 \leq j \leq k\right\}&\mbox{(by conjugacy)}\\\\
&= \left\{\left(d_{\pi_2(j)},d_{\pi_2 \tau_2(j)}\right)\,|\, 0 \leq j \leq k\right\}&\\\\
&=E_{p_2},&\\
\end{array}
\]
and the proof is complete.
\end{proof}

One of the limitations of classifying permutiples according to conjugacy, as it is defined above, is that only examples of a specified length can be conjugates. Another drawback, as mentioned earlier, is that repeated digits must be considered distinct from one another in order to make the classification. Permutiple graphs offer a way to sort and classify examples according to their most essential properties in a way that is independent of their length or presence of repeated digits.

We now make clear how we intend to handle permutations involving repeated digits. For any $(n,b,\sigma)$-permutiple, $p=(d_k,\ldots,d_0)_b,$ with repeated digits, $\sigma$ may not be the only permutation on $\{0,1,\ldots,k\}$ which gives the representation $(d_{\sigma(k)},\ldots,d_{\sigma(0)})_b.$
Taking the example
\[
(d_3,d_2,d_1,d_0)=(3,3,1,2)_4 = 2 \cdot (1,3,2,3)_4=2\cdot (d_{\sigma(3)},d_{\sigma(2)},d_{\sigma(1)},d_{\sigma(0)})_4
\]
as a case in point, we may take $\sigma$ to be either 
$\left(\begin{array}{cccc}0 & 1 & 2 & 3 \\ 3 & 0 & 2 & 1\end{array}\right)$ or $\left(\begin{array}{cccc}0 & 1 & 2 & 3 \\ 2 & 0 & 3 & 1\end{array}\right).$ It is clear by mapping the indices to their corresponding digits, using the same tableau presentation as above, we obtain the same result,
\[
\left(\begin{array}{cccc}d_0 & d_1 & d_2 & d_3 \\ d_3 & d_0 & d_2 & d_1\end{array}\right)
=\left(\begin{array}{cccc}d_0 & d_1 & d_2 & d_3 \\ d_2 & d_0 & d_3 & d_1\end{array}\right)
=\left(\begin{array}{cccc}2 & 1 & 3 & 3 \\ 3 & 2 & 3 & 1\end{array}\right),
\]
which is simply another representation of the collection of edges of this example's graph, $\{(2,3),(1,2),(3,3),(3,1)\}.$ 

The above example helps to underscore two elementary facts which hold in general: 1) any permutiple, $p,$ with repeated digits will have more than one permutation which describes the rearrangement of its digits, and 2) the graph of $p$ is unaffected by our choice from this collection of permutations. That said, some of these permutations are more advantageous than others in the sense that a permutiple's graph can convey information about the digit permutation itself provided that the permutation has a certain property. We shall presently state this property, noting that we  use the notation $(d_{j_1},d_{j_2},\ldots,d_{j_m-1},d_{j_m})$ to mean the collection of edges $\left \{(d_{j_1},d_{j_2}),(d_{j_2},d_{j_3}),\ldots (d_{j_m-1},d_{j_m}),(d_{j_m},d_{j_1})\right \},$ regardless of whether the collection is a cycle or a circuit on $G_p.$ We shall also refer to loops as 1-cycles.

\begin{definition}
Let $p=(d_k,\ldots,d_0)_b$ be an $(n,b,\sigma)$-permutiple with graph $G_p.$ Also, suppose $0\leq j \leq k,$ and let $m$ be the smallest natural number such that $j=\sigma^m(j).$ We say that $\sigma$ {\it preserves} the cycle $(j,\sigma(j),\ldots,\sigma^{m-1}(j))$ if $(d_{j},d_{\sigma(j)},\ldots,d_{\sigma^{m-1}(j)})$ is a cycle on $G_p.$ If $\sigma$ preserves every one of its disjoint cycles, then we say that $\sigma$ is {\it cycle-preserving}. 
\end{definition}

For the above example, $p=(d_3,d_2,d_1,d_0)=(3,3,1,2)_4 = 2 \cdot (1,3,2,3)_4,$ we see that 
$\left(\begin{array}{cccc}0 & 1 & 2 & 3 \\ 3 & 0 & 2 & 1\end{array}\right)=(0,3,1)(2)$ 
is cycle-preserving since $(d_0,d_3,d_1)(d_2)=(2,3,1)(3)$ is a union of cycles of $G_p.$ On the other hand,  
$\left(\begin{array}{cccc}0 & 1 & 2 & 3 \\ 2 & 0 & 3 & 1\end{array}\right)=(0,2,3,1)$ is not cycle-preserving since $(d_0,d_2,d_3,d_1)=(2,3,3,1)$ represents a circuit of $G_p,$ but is not a cycle.

\begin{theorem}
 For every $(n,b,\sigma)$-permutiple, $p=(d_k,\ldots,d_0)_b,$ there is a cycle-preserving permutation, $\hat{\sigma},$ such that
 $(d_k,\ldots, d_0)_b=n(d_{\hat{\sigma}(k)},\ldots, d_{\hat{\sigma}(0)})_b.$
\end{theorem}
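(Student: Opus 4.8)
The plan is to start from an arbitrary permutation $\sigma$ realizing the digit rearrangement of $p$ and repair it one cycle at a time until it becomes cycle-preserving, all the while keeping the property that it produces the representation $(d_{\sigma(k)},\ldots,d_{\sigma(0)})_b$. The key observation is that the edge-multiset $E_p=\{(d_j,d_{\sigma(j)})\mid 0\le j\le k\}$ is an invariant of $p$: by the elementary fact already illustrated in the excerpt (mapping indices to their digits via the tableau), any two permutations giving the same digit string produce the same collection of edges of $G_p$. So $G_p$ is fixed, and what varies is only how the index-cycles of $\sigma$ sit over the cycles/circuits of $G_p$. A permutation is cycle-preserving precisely when each of its index-cycles projects to an honest cycle (not merely a circuit) of $G_p$; the obstruction is exactly an index-cycle $(j,\sigma(j),\ldots,\sigma^{m-1}(j))$ whose digit sequence $(d_j,d_{\sigma(j)},\ldots,d_{\sigma^{m-1}(j)})$ revisits some digit value, i.e.\ a non-reduced circuit.

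First I would isolate such a bad index-cycle, say $C=(j_0,j_1,\ldots,j_{m-1})$ with $j_\ell=\sigma^\ell(j_0)$, and pick the shortest repetition: indices $a<c$ in $\{0,\ldots,m-1\}$ with $d_{j_a}=d_{j_c}$ and all digits strictly between distinct. Then I would rewrite $\sigma$ by ``splitting'' $C$ at this coincidence into two shorter index-cycles, $(j_a,j_{a+1},\ldots,j_{c-1})$ and $(j_c,j_{c+1},\ldots,j_{m-1},j_0,\ldots,j_{a-1})$ — concretely, redefine $\sigma$ on $C$ by setting the image of $j_{c-1}$ to be $j_a$ (instead of $j_c$) and the image of $j_{a-1}$ to be $j_c$ (instead of $j_a$), leaving $\sigma$ unchanged elsewhere. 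The crucial checks are: (i) this new permutation still realizes the same digit string, because $d_{j_a}=d_{j_c}$ means the digit in position $j_{c-1}$ is unchanged ($d_{j_a}=d_{j_c}$) and likewise for position $j_{a-1}$; and (ii) the edge-multiset $E_p$ is unchanged, since we have only swapped which copies of the edges $(d_{j_{c-1}},\cdot)$ and $(d_{j_{a-1}},\cdot)$ we use, and $d_{j_a}=d_{j_c}$ makes these the same edges. Hence $G_p$ is genuinely unaffected, consistent with fact (2) highlighted before the definition of cycle-preserving.

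To see this terminates, I would put a monovariant on permutations realizing $p$: the number of index-cycles, which strictly increases with each split, is bounded above by $k+1$. (Equivalently, one may induct on $\sum$ over index-cycles of (cycle length $-$ number of distinct digit values in the cycle), a nonnegative integer that strictly decreases.) When no bad index-cycle remains, every index-cycle of the resulting permutation $\hat\sigma$ projects to a reduced circuit, i.e.\ a genuine cycle of $G_p$ (loops counting as $1$-cycles as stipulated), so $\hat\sigma$ is cycle-preserving, and by construction $(d_k,\ldots,d_0)_b=n(d_{\hat\sigma(k)},\ldots,d_{\hat\sigma(0)})_b$.

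I expect the main obstacle to be the bookkeeping in step two: verifying carefully that a single split of a minimal-length digit repetition both preserves the digit string and leaves the edge-multiset (hence $G_p$) intact, and making sure the two pieces produced are themselves honest cycles when the repetition was chosen minimal — in particular that the newly created shorter index-cycles cannot be vacuous or accidentally re-merge. Everything else (the invariance of $E_p$ under choice of realizing permutation, the termination count) is routine once this local surgery is pinned down.
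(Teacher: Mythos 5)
Your proposal is correct and follows essentially the same route as the paper: locate an index-cycle whose digit sequence repeats a value, split that cycle at the coincidence (the digit equality $d_{j_a}=d_{j_c}$ guaranteeing both the digit string and the edge collection of $G_p$ are unchanged), and iterate; your explicit monovariant (the strictly increasing number of index-cycles, bounded by $k+1$) merely makes the paper's ``repeat until done'' termination step more precise.
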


\begin{proof}
If $\sigma$ is already cycle-preserving, then we are done. So, suppose that $\sigma$ is not cycle-preserving. Then, we can find a disjoint cycle, $\tau=(j,\sigma(j),\ldots,\sigma^{m-1}(j)),$ of $\sigma$ such that $(d_{j},d_{\sigma(j)},\ldots,d_{\sigma^{m-1}(j)})$ is not a cycle of $G_p,$ but a circuit. Therefore, there are at least two indices, $\sigma^{m_1}(j)$ and $\sigma^{m_2}(j),$ for which $d_{\sigma^{m_1}(j)}=d_{\sigma^{m_2}(j)}.$ Rewriting $\tau$ as 
\[
\left(j,\sigma(j),\ldots,\sigma^{m_1-1}(j),\sigma^{m_1}(j),\ldots,\sigma^{m_2-1}(j),\sigma^{m_2}(j),\ldots,\sigma^{m-1}(j)\right),
\] 
we construct a new permutation, 
\[
\hat{\tau}=\left(j,\sigma(j),\ldots,\sigma^{m_1-1}(j),\sigma^{m_2}(j),\ldots,\sigma^{m-1}(j)\right)\left(\sigma^{m_1}(j),\ldots,\sigma^{m_2-1}(j)\right).
\]
Both permutations, $\tau$ and $\hat{\tau},$ define the same collection of edges of $G_p.$ So, replacing $\tau$ with $\hat{\tau}$ in the cycle decomposition of $\sigma$ does not change the sequence of permuted digits. 

Now, if $\tau$ contains indices for which there are multiple digits with repeats, the above process can still be applied for each repeat until no disjoint cycle contains indices corresponding to repeated digits. Applying this process to every disjoint cycle of $\sigma,$ and naming the resulting product of cycles $\hat{\sigma},$ we see that $\hat{\sigma}$ preserves all of its disjoint cycles. Since $\hat{\sigma}$ defines the same collection of edges of $G_p,$ we have that $(d_k,\ldots, d_0)_b=n(d_{\sigma(k)},\ldots, d_{\sigma(0)})_b=n(d_{\hat{\sigma}(k)},\ldots, d_{\hat{\sigma}(0)})_b.$   
\end{proof}

\begin{remark}
 We note that the choice of cycle-preserving permutation is generally not unique. Also, it is clear that for any cycle-preserving permutation, $\sigma,$ we have for any $j$ such that $d_j=d_{\sigma(j)}$  that $\sigma(j)=j.$ That is, $j$ is a fixed point of $\sigma.$
\end{remark}

The above framework enables us to more broadly classify $(n,b)$-permutiples. 

\begin{definition}\label{perm_class}
Let $p$ be an $(n,b)$-permutiple with graph $G_p.$ We define the {\it class of $p$} to be the collection, $C,$ of all $(n,b)$-permutiples, $q,$ such that $G_q$ is a subgraph of $G_p.$ We also define the graph of the class to be $G_p,$ which we will denote as $G_C$ and will call the {\it graph of $C.$}
\end{definition}

The above definitions unify all of the $(4,10)$-permutiple examples considered here and in other works \cite{hoey,holt_1,holt_2,holt_3,holt_4,kendrick_1,sloane,young_1,young_2} into a single class. These include $(8,7,1,2)_{10}=4 \cdot (2,1,7,8)_{10},$ $(8,7,9,1,2)_{10}=4 \cdot (2,1,9,7,8)_{10},$ $(8,7,9,9,1,2)_{10}=4 \cdot (2,1,9,9,7,8)_{10},$ as well as $(7,1,2,8)_{10}=4 \cdot (1,7,8,2)_{10},$ $(7,9,1,2,8)_{10}=4 \cdot (1,9,7,8,2)_{10},$ $(7,9,9,1,2,8)_{10}=4 \cdot (1,9,9,7,8,2)_{10},$ $(8,7,1,2,8,7,1,2)_{10}=4 \cdot (2,1,7,8,2,1,7,8)_{10},$ and so forth.

\begin{remark}
Under the above definition, any two $(n,b)$-permutiples in the same conjugacy class are also members of the same permutiple class.
\end{remark}

\begin{theorem}
Let $C$ be any $(n,b)$-permutiple class. Then, $G_C$ is a union of cycles of $M.$
\label{cycle_union}
\end{theorem}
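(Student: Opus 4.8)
The plan is to exploit the cycle-preserving machinery just developed. Since $G_C = G_p$ for some $(n,b,\sigma)$-permutiple $p = (d_k,\ldots,d_0)_b$, and since by the theorem just proved we may replace $\sigma$ by a cycle-preserving permutation without changing $G_p$, I would begin by fixing such a cycle-preserving $\sigma$. I would also recall the observation made just after Definition \ref{mother_graph} that $G_p$ is a subgraph of $M$, so that $E_p \subseteq E_M$ and every vertex appearing in $G_p$ is a vertex of $M$.

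Next I would write $\sigma$ as a product of disjoint cycles $\gamma_1 \gamma_2 \cdots \gamma_r$ on $\{0,1,\ldots,k\}$, counting each fixed point of $\sigma$ as a $1$-cycle, so that the supports of the $\gamma_i$ partition $\{0,1,\ldots,k\}$. Writing $\gamma_i = (j_i, \sigma(j_i),\ldots,\sigma^{m_i-1}(j_i))$, the definition of ``cycle-preserving'' says exactly that each $\delta_i := (d_{j_i}, d_{\sigma(j_i)},\ldots,d_{\sigma^{m_i-1}(j_i)})$ is a cycle of $G_p$ (a loop, i.e. a $1$-cycle under the paper's convention, when $\gamma_i$ is a fixed point).

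I would then assemble the pieces. Because the supports of the $\gamma_i$ partition the index set, $E_p = \{(d_j, d_{\sigma(j)}) : 0 \le j \le k\}$ splits as $\bigcup_{i=1}^{r}\{(d_j, d_{\sigma(j)}) : j \in \mathrm{supp}(\gamma_i)\} = \bigcup_{i=1}^{r} E(\delta_i)$, where $E(\delta_i)$ denotes the edge set of $\delta_i$. Each $\delta_i$ is a cycle of $G_p$; since $E_p \subseteq E_M$ and $M$ has every base-$b$ digit among its vertices, $\delta_i$ is also a cycle of $M$. Hence $G_C = G_p$ is the union of the cycles $\delta_1,\ldots,\delta_r$ of $M$, which is precisely the assertion.

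The argument is short, and the one genuine subtlety is exactly the one the cycle-preserving theorem was designed to handle: for an arbitrary $\sigma$ a disjoint cycle $\gamma_i$ could map, via the digits, to a closed walk in $G_p$ that revisits a vertex and therefore fails to be a cycle, which would destroy the decomposition of $E_p$ into cycles. Passing to a cycle-preserving $\sigma$ removes this obstruction entirely, so I expect no further difficulty; the only remaining point to verify is the (immediate) fact that a cycle of the subgraph $G_p$ is automatically a cycle of $M$, which follows at once from $E_p \subseteq E_M$.
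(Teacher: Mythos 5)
Your proposal is correct and follows essentially the same route as the paper: both fix a cycle-preserving $\sigma$ (via the preceding theorem) and observe that its disjoint cycles yield digit cycles of $G_p\subseteq M$ covering the graph. Your version is if anything slightly more careful, since you explicitly partition $E_p$ into the edge sets of these cycles rather than only arguing that every vertex lies on some cycle.
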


\begin{proof}
Since every edge of $G_C$ is also an edge of $M,$ it is sufficient to show that every vertex of $G_C$ having positive outdegree lies on some cycle. Choose an $(n,b,\sigma)$-permutiple, $p=(d_k, d_{k-1},\ldots, d_0)_b,$ so that $G_C=G_p$ and $\sigma$ is cycle-preserving. Then, it is sufficient to show that any digit, $d_j,$ of $p$ lies on a cycle of $G_p.$ Find the $m$-cycle, $(j_1,j_2,\ldots,j,\ldots, j_m),$ in the cycle decomposition of $\sigma$ which contains $j.$ Since $\sigma$ was chosen to be cycle-preserving, $(d_{j_1},$ $d_{j_2},$ $\ldots,$ $d_j,$ $\ldots,$ $d_{j_m})$ is a graph cycle which contains $d_j.$ 
\end{proof}

\section{A Method for Finding All Permutiples up to a Known Length}

Theorems \ref{digits_carries_1} and \ref{cycle_union} give us a method for finding all permutiples of a known base, $b,$ multiplier, $n,$ and length, $\ell,$ without the requirement of a specific example to start with. Theorem \ref{cycle_union} tells us that once we identify all of the digit cycles of the easily-constructed $(n,b)$-mother graph, $M,$ we may form combinations of these, with 1-cycles included, whose lengths add to $\ell.$ These, in turn, give candidate collections of possible permutiple digits. We may narrow down these candidates by using a corollary to Theorem \ref{digits_carries_1}; summing the equation in Theorem \ref{digits_carries_1} over all $0 \leq j \leq k=\ell-1$ gives us the equation
\begin{equation}\label{digits_carries_2}
(n-1)\sum_{j=0}^{k}d_j=(b-1)\sum_{j=1}^{k} c_j. 
\end{equation}
The results of \cite{holt_4} may then be applied to the remaining digit collections.

We now put the above into practice with an example.

\begin{example}
 Using the method described above, we shall find all $(2,6)$-permutiples up to length 5. The $(2,6)$-mother graph is seen in Figure \ref{2_6_mg}.
\begin{center}
\begin{tikzpicture} 
\tikzset{edge/.style = {->,> = latex'}} 
\tikzset{vertex/.style = {shape=circle,draw,minimum size=1.5em}} [xscale=2, yscale=2, auto=left,every node/.style={circle,fill=blue!20}] 
\node[vertex] (n0) at (12,5) {$0$}; 
\node[vertex] (n1) at (11.00000153205039,6.732049923038268) {$1$}; \node[vertex] (n2) at (9.000003064103128,6.732052576626029) {$2$}; \node[vertex] (n3) at (8.000000000007041,5.000005307179587) {$3$}; \node[vertex] (n4) at (8.999993871803133,3.2679527305616887) {$4$}; \node[vertex] (n5) at (10.999992339736313,3.2679447697984068) {$5$};
\draw[edge](n0) to[loop] (n0);
\draw[edge, bend right=0](n0) to (n3);
\draw[edge, bend right=0](n1) to (n0);
\draw[edge, bend right=5](n1) to (n3);
\draw[edge, bend right=0](n2) to (n1);
\draw[edge, bend right=5](n2) to (n4);
\draw[edge, bend right=5](n3) to (n1);
\draw[edge, bend right=0](n3) to (n4);
\draw[edge, bend right=5](n4) to (n2);
\draw[edge, bend right=0](n4) to (n5);
\draw[edge, bend right=0](n5) to (n2);
\draw[edge](n5) to[loop] (n5);
\end{tikzpicture}
\captionof{figure}{The $(2,6)$-mother graph.}
\label{2_6_mg}
\end{center}
The possible digit cycles and their lengths are presented in Table \ref{2_6_cycles}.
\begin{center}
 \begin{tabular}{|l|l|}
 \hline
1-cycles & $(0),$ $(5)$  \\\hline
2-cycles & $(1,3),$ $(2,4)$  \\\hline
3-cycles & $(0,3,1),$ $(2,4,5)$  \\\hline
4-cycles & $(1,3,4,2)$  \\\hline
5-cycles & $(0,3,4,2,1),$ $(1,3,4,5,2)$  \\\hline
6-cycles & $(0,3,4,5,2,1)$  \\\hline
\end{tabular}
\captionof{table}{Cycles of the $(2,6)$-mother graph.}
\label{2_6_cycles}
\end{center}

We first find all combinations of digit-cycle representations whose lengths add to 5. All cycle combinations which consist solely of 1-cycles will be ignored since they either result in trivial examples or no examples. Also, representations with leading or trailing zeros may be discarded at the discretion of any reader who objects to them. However, we shall include them since, in doing so, finding all 5-digit examples amounts to finding all examples up to 5-digits since any 2, 3, and 4-digit examples will appear in our 5-digit list. For example, the 5-digit permutiple $(0,4,3,1,2)_6 = 2 \cdot (0,2,1,3,4)_6$ reveals a 4-digit example, $(4,3,1,2)_6 = 2 \cdot (2,1,3,4)_6.$

Considering 5-digit $(2,6)$-permutiples, there are several possibilities for digit-cycle combinations whose lengths add to 5: 
\[
\begin{array}{l}
(0,3,4,2,1), (1,3,4,5,2),\\
(1,3,4,2)(0),(1,3,4,2)(5),\\ 
(0,3,1)(1,3), (0,3,1)(2,4),\\ 
(0,3,1)(0)(5), (2,4,5)(1,3),\\ 
(2,4,5)(2,4), (2,4,5)(0)(5),\\
(1,3)(2,4)(0), (1,3)(2,4)(5),\\
(1,3)(1,3)(0),(2,4)(2,4)(0),\\ 
(1,3)(1,3)(5), \mbox{ and } (2,4)(2,4)(5).
\end{array}
\]
Now, in this case, Equation \ref{carries} becomes $\displaystyle \sum_{j=0}^{4}d_j=5\sum_{j=1}^{4} c_j.$ Thus, the sum of the digits of the union of cycles must be divisible by 5. This narrows down our list even further:
$(0,3,4,2,1),$ $(1,3,4,5,2),$ $(1,3,4,2)(0),$ $(1,3,4,2)(5),$ $(0,3,1)(2,4),$ $(2,4,5)(1,3),$ $(1,3)(2,4)(0),$ and $(1,3)(2,4)(5).$  These candidates tell us which collections of digits might yield examples. We shall express these collections as 5-tuples whose entries contain the permuted digits: $(0,1,2,3,4)$  and  $(1,2,3,4,5).$ We may now apply the results of \cite{holt_4} to find permutations of these collections which yield permutiples. The list of all 5-digit $(2,6)$-permutiples in Table \ref{2_6_5_perms} is organized by digit-cycle decomposition. 
\begin{center}
\begin{footnotesize}
 \begin{tabular}{|c|c|}
\hline
 Digit Cycles& Examples\\\hline
$(0,3,4,2,1)$& $(4,2,1,3,0)_6=2\cdot(2,1,0,4,3)_6$ \\
&$(2,4,1,3,0)_6=2\cdot(1,2,0,4,3)_6$\\
&$(4,1,3,0,2)_6=2\cdot(2,0,4,3,1)_6$\\
&$(1,3,0,4,2)_6=2\cdot(0,4,3,2,1)_6$\\
&$(2,1,3,0,4)_6=2\cdot(1,0,4,3,2)_6$\\
&$(1,3,0,2,4)_6=2\cdot(0,4,3,1,2)_6$\\\hline
$(1,3,4,5,2)$&$(5,3,1,4,2)_6=2\cdot(2,4,3,5,1)_6$\\
&$(5,1,3,4,2)_6=2\cdot(2,3,4,5,1)_6$\\
&$(2,5,3,1,4)_6=2\cdot(1,2,4,3,5)_6$\\
&$(2,5,1,3,4)_6=2\cdot(1,2,3,4,5)_6$\\\hline
$(0,3,1)(2,4)$& $(4,3,2,1,0)_6=2\cdot(2,1,4,0,3)_6$\\
&$(3,2,4,1,0)_6=2\cdot(1,4,2,0,3)_6$\\
&$(4,1,2,3,0)_6=2\cdot(2,0,4,1,3)_6$\\
&$(1,2,4,3,0)_6=2\cdot(0,4,2,1,3)_6$\\
&$(4,3,0,1,2)_6=2\cdot(2,1,3,0,4)_6$\\
&$(3,0,4,1,2)_6=2\cdot(1,3,2,0,4)_6$\\
&$(4,1,0,3,2)_6=2\cdot(2,0,3,1,4)_6$\\
&$(1,0,4,3,2)_6=2\cdot(0,3,2,1,4)_6$\\
&$(3,2,1,0,4)_6=2\cdot(1,4,0,3,2)_6$\\
&$(1,2,3,0,4)_6=2\cdot(0,4,1,3,2)_6$\\
&$(3,0,1,2,4)_6=2\cdot(1,3,0,4,2)_6$\\
&$(1,0,3,2,4)_6=2\cdot(0,3,1,4,2)_6$\\\hline
$(2,4,5)(1,3)$&$(5,4,3,1,2)_6=2\cdot(2,5,1,3,4)_6$\\
&$(3,4,5,1,2)_6=2\cdot(1,5,2,3,4)_6$\\
&$(5,1,4,3,2)_6=2\cdot(2,3,5,1,4)_6$\\
&$(3,1,4,5,2)_6=2\cdot(1,3,5,2,4)_6$\\
&$(5,2,3,1,4)_6=2\cdot(2,4,1,3,5)_6$\\
&$(3,2,5,1,4)_6=2\cdot(1,4,2,3,5)_6$\\
&$(5,1,2,3,4)_6=2\cdot(2,3,4,1,5)_6$\\
&$(3,1,2,5,4)_6=2\cdot(1,3,4,2,5)_6$\\\hline
$(1,3)(2,4)(0)$& $(4,3,1,2,0)_6=2\cdot(2,1,3,4,0)_6$\\
&$(3,1,2,4,0)_6=2\cdot(1,3,4,2,0)_6$\\
&$(4,0,3,1,2)_6=2\cdot(2,0,1,3,4)_6$\\
&$(0,4,3,1,2)_6=2\cdot(0,2,1,3,4)_6$\\
&$(3,1,2,0,4)_6=2\cdot(1,3,4,0,2)_6$\\
&$(0,3,1,2,4)_6=2\cdot(0,1,3,4,2)_6$\\\hline
$(1,3)(2,4)(5)$&$(4,3,5,1,2)_6=2\cdot(2,1,5,3,4)_6$\\
&$(4,3,1,5,2)_6=2\cdot(2,1,3,5,4)_6$\\
&$(3,5,1,2,4)_6=2\cdot(1,5,3,4,2)_6$\\
&$(3,1,5,2,4)_6=2\cdot(1,3,5,4,2)_6$\\\hline
\end{tabular}
\end{footnotesize}
\captionof{table}{The collection of all 5-digit $(2,6)$-permutiples.}
\label{2_6_5_perms}
\end{center}
\end{example}

\begin{remark}
From Table \ref{2_6_5_perms}, we may infer that there are no 2- or 3-digit examples. Also, the reader may observe that the collections of permutiples corresponding to the digit-cycle combinations $(1,3,4,5,2),$ $(2,5,4)(1,3),$ and $(1,3)(2,4)(5)$ are the $(2,6)$-permutiple conjugacy classes found in \cite{holt_3} from the single example $(4,3,5,1,2)_6 =
2 \cdot (2,1,5,3,4)_6.$
\end{remark}

\section{Finite-State Machine Methods}

The above results and methods make little mention of the carries in the process of single-digit multiplication. However, in order to broaden our scope, we shall now bring them into the analysis. Hoey \cite{hoey} describes palintiples as a formal language recognized by a finite-state machine, and Sloane's \cite{sloane} Young-graph construction is essentially the state diagram for Hoey's machines. More recent efforts applying similar techniques to problems in number theory include the work of Faber and Grantham \cite{faber} who use finite-state machines (deterministic finite automata) to find integers whose sum is the reverse of their product in an arbitrary base. We now cast the general permutiple problem into a similar light. We note that our construction will be somewhat different from those found in \cite{hoey, sloane} since we are not working with any particular digit permutation. 

Before we begin, we note that for any permutiple carry, $0\leq c_j \leq n-1,$ and any base-$b$ digits, $d_1$ and $d_2,$ we have that $-(b-1)\leq nd_{2}-d_1+c_j \leq n(b-1)+n-1,$ so that $-(b-1)\leq bc_{j+1} \leq nb-1,$ which guarantees that $0\leq c_{j+1} \leq n-1.$ We may now describe a finite-state machine, which, among other numbers, recognizes $(n,b)$-permutiples. Taking non-negative integers less than $n$ as the collection of states, and the edges of $M$ as the input alphabet, the equation
\begin{equation}
 c_2=\left[nd_{2}-d_1+c_1\right]\div b
 \label{transition} 
\end{equation}
defines a state-transition function from state $c_1$ to state $c_2$ with $(d_1,d_{2})$ serving as the input which induces the transition. This transition corresponds to a labeled edge on the state diagram as seen in Figure \ref{stae_diagram}. 
\begin{center}
\begin{tikzpicture}
\tikzset{edge/.style = {->,> = latex'}}
\tikzset{vertex/.style = {shape=circle,draw,minimum size=1.5em}}
[xscale=2, yscale=2, auto=left,every node/.style={circle,fill=blue!20}]
\node[vertex] (n0) at (7,5) {$c_1$};
\node[vertex] (n1) at (13,5) {$c_2$};
\draw[edge] (n0) edge node[above] {$(d_1,d_2)$} (n1);
\end{tikzpicture}
\captionof{figure}{An edge on the state diagram.}
\label{stae_diagram}
\end{center}
Since the first carry, $c_0,$ of any single-digit multiplication is defined to be zero, the initial state must always be zero. Also, for an $\ell$-digit $(n,b)$-permutiple, $c_{\ell}$ must also be zero, otherwise, the result would be an $(\ell+1)$-digit number. That is, zero must be the only accepting state. We shall call the above construction the $(n,b)$-{\it Hoey-Sloane machine}, and we shall call its state diagram the $(n,b)$-{\it Hoey-Sloane graph}, which we will denote as $\Gamma.$ 

Here, we point out that, rather than strings of base-$b$ digits, the $(n,b)$-Hoey-Sloane machine accepts strings of edges from $M$ as inputs. Furthermore, since multiple digit pairs, $(d_1,d_{2}),$ can solve Equation (\ref{transition}) for particular values of $c_1$ and $c_2,$ there are generally multiple inputs which the machine will accept for a transition to occur. Thus, a collection of inputs (a subset of the edges of $M$) is assigned to each edge on $\Gamma$ by the mapping  $(c_1,c_2) \mapsto \{(d_1,d_2)\in E_M|c_2=\left[nd_{2}-d_1+c_1\right]\div b\},$ where $E_M$ is the collection of edges of $M.$ For simpler figures, we will leave off the set braces in figures depicting $\Gamma$ and its subgraphs.   

We shall denote the regular language of input strings accepted by the $(n,b)$-Hoey-Sloane machine as $L.$ We may describe $L$ as finite sequences of edge-label inputs which define walks on $\Gamma$ whose initial and final states are zero. For simplicity, we will call such walks {\it $L$-walks}. Members of $L$ which produce permutiple numbers will be called {\it $(n,b)$-permutiple strings.} When the context is clear, we may omit the ``$(n,b)$-'' prefix for smoother exposition.

\begin{example}\label{two_four_0}
 We may describe the language of input strings, $L,$ recognized by the $(2,4)$-Hoey-Sloane machine by the $(2,4)$-Hoey-Sloane graph, $\Gamma,$ seen in Figure \ref{2_4_hsg}.
\begin{center}
\begin{tikzpicture}
\tikzset{edge/.style = {->,> = latex'}}
\tikzset{vertex/.style = {shape=circle,draw,minimum size=1.5em}}    
[xscale=2, yscale=2, auto=left,every node/.style={circle,fill=blue!20}]
\node[vertex,accepting,initial] (n0) at (7,5) {$0$};
\node[vertex] (n1) at (13,5) {$1$};
\draw[edge, bend right=10] (n0) edge node[below] {$(0,2),(2,3)$} (n1);
\draw[edge, bend right=10] (n1) edge node[above] {$(1,0),(3,1)$} (n0);
\draw[edge](n0) to[loop] node[pos=0.75,left] {$(0,0),(2,1)$} (n0);
\draw[edge](n1) to[loop] node[pos=0.25,right] {$(1,2),(3,3)$} (n1);
\end{tikzpicture}
\captionof{figure}{The $(2,4)$-Hoey-Sloane graph.}
\label{2_4_hsg}
\end{center}
Clearly, all $(2,4)$-permutiple strings are members of $L.$ For instance, the $(2,4)$-permutiple $(3, 1, 2, 0)_4 = 2 \cdot (1, 2, 3, 0)_4$ corresponds to the $(2,4)$-permutiple string $(0,0)(2,3)(1,2)(3,1),$ which is certainly a member of $L.$ The corresponding $L$-walk is visualized in Figure \ref{2_4_L_walk}. 
\begin{center}
\begin{tikzpicture}
\tikzset{edge/.style = {->,> = latex'}}
\tikzset{vertex/.style = {shape=circle,draw,minimum size=1.5em}}
[xscale=2, yscale=2, auto=left,every node/.style={circle,fill=blue!20}]
\node[vertex] (n0) at (0,5) {$0$};
\node[vertex] (n1) at (2,5) {$0$};
\node[vertex] (n2) at (4,5) {$1$};
\node[vertex] (n3) at (6,5) {$1$};
\node[vertex] (n4) at (8,5) {$0$};
\draw[edge] (n0) edge node[above] {$(0,0)$} (n1);
\draw[edge] (n1) edge node[above] {$(2,3)$} (n2);
\draw[edge] (n2) edge node[above] {$(1,2)$} (n3);
\draw[edge] (n3) edge node[above] {$(3,1)$} (n4);
\end{tikzpicture}
\captionof{figure}{The $L$-walk of the $(2,4)$-permutiple string $(0,0)(2,3)(1,2)(3,1).$}
\label{2_4_L_walk}
\end{center}
\end{example}

Within this new framework, Theorem \ref{cycle_union} gives us a corollary.

\begin{corollary}\label{cycle_union_2}
Let $s=(d_{0},\hat{d_{0}})(d_{1},\hat{d_{1}})\cdots(d_{k},\hat{d_{k}})$ be a member of $L.$ If $s$ is a permutiple string, then the collection of ordered-pair inputs of $s$ is a union of cycles of $M.$
\end{corollary}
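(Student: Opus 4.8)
\emph{Proof proposal.} The plan is to recognize this corollary as nothing more than Theorem~\ref{cycle_union} re-expressed in the language of the Hoey--Sloane machine, so the work is almost entirely in unwinding the definitions. First I would unpack the hypothesis. To say that $s=(d_{0},\hat d_{0})(d_{1},\hat d_{1})\cdots(d_{k},\hat d_{k})\in L$ is a permutiple string means that the number it encodes, namely $(d_k,d_{k-1},\ldots,d_0)_b$, is an $(n,b)$-permutiple whose witnessing rearrangement is the one recorded by the string; that is, $(\hat d_k,\hat d_{k-1},\ldots,\hat d_0)$ is a rearrangement of $(d_k,d_{k-1},\ldots,d_0)$ and $(d_k,\ldots,d_0)_b=n(\hat d_k,\ldots,\hat d_0)_b$. (That the carry relation $bc_{j+1}-c_j=n\hat d_j-d_j$ along the $L$-walk of $s$ forces this last equality follows by summing, using $c_0=c_{k+1}=0$, exactly as in the derivation of Equation~(\ref{digits_carries_2}).) Hence there is a permutation $\sigma$ on $\{0,1,\ldots,k\}$ with $\hat d_j=d_{\sigma(j)}$ for every $0\le j\le k$, so that $p:=(d_k,\ldots,d_0)_b$ is an $(n,b,\sigma)$-permutiple.

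Next I would identify the collection of ordered-pair inputs of $s$ with a permutiple graph. By the previous paragraph this collection is
\[
\{(d_j,\hat d_j)\mid 0\le j\le k\}=\{(d_j,d_{\sigma(j)})\mid 0\le j\le k\}=E_p,
\]
the edge set of $G_p$ in the sense of Definition~\ref{class_graph}; each such pair is indeed an edge of $M$, since the input alphabet of the $(n,b)$-Hoey--Sloane machine is by construction the edge set of $M$ (alternatively, this is exactly the content of the theorem bounding $\lambda(d_j+(b-n)d_{\sigma(j)})$ by $n-1$). Letting $C$ be the class of $p$ as in Definition~\ref{perm_class}, we have $G_C=G_p$, and Theorem~\ref{cycle_union} asserts that $G_C$ is a union of cycles of $M$. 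Since the edges of $G_C$ are precisely the ordered-pair inputs of $s$, this is the desired conclusion.

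I do not expect a genuine obstacle here: once the translation is in place, the statement is immediate from Theorem~\ref{cycle_union}. The only point requiring care is the bookkeeping in the identification $\{(d_j,\hat d_j)\}=E_p$ — one must check that the first coordinate of each input edge is a digit of the permutiple and the second the corresponding digit of its rearrangement, so that the input collection matches $E_p$ on the nose rather than, say, its reversal; this is settled by comparing Equation~(\ref{transition}) with Theorem~\ref{digits_carries_1}. It is also worth remarking explicitly that we need not arrange for $\sigma$ to be cycle-preserving ourselves: that reduction is carried out inside the proof of Theorem~\ref{cycle_union}, which we invoke as a black box.
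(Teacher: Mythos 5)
Your proposal is correct and takes essentially the same route as the paper, which offers no separate argument and treats the corollary as immediate from Theorem~\ref{cycle_union}: a permutiple string encodes an $(n,b,\sigma)$-permutiple $p$, its ordered-pair inputs are exactly the edge set $E_p$ of $G_p=G_C$, and that graph is a union of cycles of $M$. The only slip is in your parenthetical: recovering $(d_k,\ldots,d_0)_b=n(\hat d_k,\ldots,\hat d_0)_b$ from the carry relations along the $L$-walk requires multiplying the $j$th relation by $b^j$ and telescoping, not the unweighted summation that produces Equation~(\ref{digits_carries_2}); since being a permutiple string already presupposes the encoded multiplication, this remark is inessential and does not affect the argument.
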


Here, we underscore both that $L$ also consists of input strings which are not permutiple strings, and that the converse of the above is not generally true. As an example, $(2,1)(0,2)(1,2)(1,0)$ is a member of $L$ corresponding to the multiplication $(1,1,0,2)_4=2\cdot(0,2,2,1)_4,$ yet this is not a $(2,4)$-permutiple. Moreover, these inputs form a union of cycles on the $(2,4)$-mother graph, showing that the converse of Corollary \ref{cycle_union_2} does not hold. With a simple modification, however, the above input string can be extended to $(2,1)(0,2)(1,2)(1,0)(2,1),$ which yields a permutiple,  $(2,1,1,0,2)_4=2\cdot(1,0,2,2,1)_4.$  Another non-permutiple example, $(2,2,1,1,0,2)_4=2\cdot(1,1,0,2,2,1)_4,$ bears strong resemblance to a $(2,4)$-permutiple, and is generated from the input string $(2,1)(0,2)(1,2)(1,0)(2,1)(2,1).$

Corollary \ref{cycle_union_2} tells us that the cycles of $M$ play a key role in our search for permutiple strings, but the presence of cycles alone is not enough to guarantee that an input string in $L$ is a permutiple string. In general, we see that for an input string, $s=(d_{0},\hat{d_{0}})(d_{1},\hat{d_{1}})\cdots(d_{k},\hat{d_{k}}),$ in $L$ to be a permutiple string, it must be that the two strings of base-$b$ digits formed by the left and right components of the inputs of $s,$ that is, $d_{k}\cdots d_{1} d_{0}$ and $\hat{d_{k}}\cdots \hat{d_{1}} \hat{d_{0}},$ are permutations of one another. That is, members of $L$ whose left and right components form the same multiset (allowing for repeat digits to be considered distinct) will result in an $(n,b)$-permutiple. Now, collections of inputs for which the left and right components form the same collection of digits are precisely the cycles of $M.$ It follows that any multiset union of cycles of $M$ which can be ordered into an input string, $s,$ which belongs to $L,$ must result in a permutiple string. Since members of $L$ can be visualized as $L$-walks on the $(n,b)$-Hoey-Sloane graph, $\Gamma,$ we see that the edges of $\Gamma$ associated with the inputs of $s$ must define a strongly-connected subgraph of $\Gamma$ containing the zero state. 

With the above observations, we may now describe a method for identifying multiset unions of mother-graph cycles which may be ordered into permutiple strings. We may examine the subgraphs of $\Gamma$ associated with individual cycles of $M.$ If a union of these subgraphs forms a strongly-connected subgraph of $\Gamma$ which contains the zero state, then the multiset union of individual cycles may be ordered into a permutiple string by forming $L$-walks with the aid of $\Gamma.$  

More precisely, we list all the cycles, $\mathscr{C}=\{C_0,C_1,\ldots,C_m\},$ of $M.$ Each element, $C_j,$ of $\mathscr{C}$ defines a subgraph, $\Gamma_j,$ of $\Gamma$ where each edge of $\Gamma_j$ is assigned the edge-label collection by the mapping
$
(c_1,c_2)\mapsto \left\{(d_1,d_2)\in C_j|c_2=\left[nd_{2}-d_1+c_1\right]\div b\right \}.  
$
Any edge, $(c_1,c_2),$ for which this collection is empty will not be included as an edge on $\Gamma_j.$ With the above edges, any state for which both the indegree and outdegree are zero will not be included as a vertex. Each $\Gamma_j$ will be referred to as {\it the image of $C_j,$} or simply as a {\it cycle image}. Letting $I$ be a multiset whose support is a subset, $J,$ of $\{0,1,\ldots,m\},$ then, if the cycle-image union $\Gamma_{J}=\bigcup_{j\in J}\Gamma_j$ (edge labels included) is a strongly-connected subgraph of $\Gamma$ containing the zero state, then the associated multiset union, $C_{I}=\bigcup_{j\in I}C_j,$ may be ordered into permutiple strings by forming $L$-walks on $\Gamma.$ This is easily accomplished by using $\Gamma_{J}.$

When forming permutiple strings, we emphasize that every element of $C_I$ must be used, repeats and all, otherwise, the multisets of left and right components will not be equal. Also, cycles may be chosen more than once, resulting in a multiset consisting of repeated values, each of which occurs with equal multiplicity.  

With the above, we have everything we need to easily generate examples of $(2,4)$-permutiples.

\begin{example}\label{two_four}
The $(2,4)$-mother graph, $M,$ is given in Figure \ref{2_4_mg}.
\begin{center}
\begin{tikzpicture} \tikzset{edge/.style = {->,> = latex'}} \tikzset{vertex/.style = {shape=circle,draw,minimum size=1.5em}} [xscale=2, yscale=2, auto=left,every node/.style={circle,fill=blue!20}] 
\node[vertex] (n0) at (12,5) {$0$}; 
\node[vertex] (n1) at (10.000002653589794,6.99999999999824) {$1$}; 
\node[vertex] (n2) at (8.000000000007041,5.0000053071795865) {$2$}; \node[vertex] (n3) at (9.99999203923062,3.0000000000158433) {$3$}; 
\draw[edge](n0) to[loop] (n0); 
\draw[edge](n0) to (n2); 
\draw[edge](n1) to (n0); 
\draw[edge, bend right=10](n1) to (n2); 
\draw[edge, bend right=10](n2) to (n1); 
\draw[edge](n2) to (n3); 
\draw[edge](n3) to (n1); 
\draw[edge](n3) to[loop] (n3); 
\end{tikzpicture}
\captionof{figure}{The $(2,4)$-mother graph.}
\label{2_4_mg}
\end{center}
The cycles, $\mathscr{C},$ written as multisets of ordered-pair inputs, are listed below:
\[
 \begin{array}{l}
C_0=(0) =\{(0,0)\},\\
C_1=(3) =\{(3,3)\},\\
C_2=(1,2) =\{(1,2),(2,1)\},\\
C_3=(0,2,1) =\{(0,2),(2,1)(1,0)\},\\
C_4=(1,2,3) =\{(1,2),(2,3),(3,1)\},\\
C_5=(0,2,3,1) =\{(0,2),(2,3),(3,1),(1,0)\}.\\
\end{array}
\] 
We now map each cycle to its corresponding image as seen in Table \ref{2_4_ci}. 
\begin{center}
\begin{scriptsize}
\begin{tabular}{|c|c|l|c|}
\hline
 & {\bf Mother-Graph Cycle} & {\bf Cycle Image} &\\\hline

$C_0$ & 
\begin{tikzpicture} \tikzset{edge/.style = {->,> = latex'}} \tikzset{vertex/.style = {shape=circle,draw,minimum size=1.5em}} [xscale=2, yscale=2, auto=left,every node/.style={circle,fill=blue!20}] 
\node[vertex] (n0) at (4,5) {$0$}; 
\draw[edge](n0) to[loop] (n0); 
\end{tikzpicture} 
&
\begin{tikzpicture} \tikzset{edge/.style = {->,> = latex'}} \tikzset{vertex/.style = {shape=circle,draw,minimum size=1.5em}} [xscale=2, yscale=2, auto=left,every node/.style={circle,fill=blue!20}] 
\node[vertex,accepting,initial] (n0) at (11,5.5) {$0$};
\node[vertex,white] (n1) at (14,5.5) {$1$};
\draw[edge](n0) to[loop] node[pos=0.75,left] {$(0,0)$} (n0);
\end{tikzpicture} 
&
$\Gamma_0$
\\\hline

$C_1$ &
\begin{tikzpicture} \tikzset{edge/.style = {->,> = latex'}} \tikzset{vertex/.style = {shape=circle,draw,minimum size=1.5em}} [xscale=2, yscale=2, auto=left,every node/.style={circle,fill=blue!20}] 
\node[vertex] (n3) at (5.99999601961531,4.000000000007922) {$3$};
\draw[edge](n3) to[loop] (n3);
\end{tikzpicture} 
&
\color{white}
\begin{tikzpicture} \tikzset{edge/.style = {->,> = latex'}} \tikzset{vertex/.style = {shape=circle,draw,minimum size=1.5em}} [xscale=2, yscale=2, auto=left,every node/.style={circle,fill=blue!20}] 
\node[vertex,accepting,initial] (n0) at (11,5.5) {$0$};
\node[vertex,black] (n1) at (14,5.5) {$1$};
\draw[edge,black](n1) to[loop] node[pos=0.25,right] {$(3,3)$} (n1);
\end{tikzpicture} 
&
$\Gamma_1$
\\\hline

$C_2$ &
\begin{tikzpicture} \tikzset{edge/.style = {->,> = latex'}} \tikzset{vertex/.style = {shape=circle,draw,minimum size=1.5em}} [xscale=2, yscale=2, auto=left,every node/.style={circle,fill=blue!20}] 
\node[vertex] (n1) at (6.000001326794896,5.99999999999912) {$1$}; 
\node[vertex] (n2) at (5.00000000000352,5.000002653589793) {$2$}; 
\draw[edge, bend right=10](n1) to (n2); 
\draw[edge,bend right=10](n2) to (n1); 
\end{tikzpicture} 

&

\begin{tikzpicture} \tikzset{edge/.style = {->,> = latex'}} \tikzset{vertex/.style = {shape=circle,draw,minimum size=1.5em}} [xscale=2, yscale=2, auto=left,every node/.style={circle,fill=blue!20}] 
\node[vertex,accepting,initial] (n0) at (11,5.5) {$0$};
\node[vertex] (n1) at (14,5.5) {$1$};
\draw[edge](n0) to[loop] node[pos=0.75,left] {$(2,1)$} (n0);
\draw[edge](n1) to[loop] node[pos=0.25,right] {$(1,2)$} (n1);
\end{tikzpicture} 
&
$\Gamma_2$
\\\hline

$C_3$ &
\begin{tikzpicture} \tikzset{edge/.style = {->,> = latex'}} \tikzset{vertex/.style = {shape=circle,draw,minimum size=1.5em}} [xscale=2, yscale=2, auto=left,every node/.style={circle,fill=blue!20}] 
\node[vertex] (n0) at (7,5) {$0$}; 
\node[vertex] (n1) at (6.000001326794896,5.99999999999912) {$1$}; 
\node[vertex] (n2) at (5.00000000000352,5.000002653589793) {$2$};  
\draw[edge](n0) to (n2); 
\draw[edge](n1) to (n0); 
\draw[edge](n2) to (n1); 
\end{tikzpicture} 

&

\begin{tikzpicture} \tikzset{edge/.style = {->,> = latex'}} \tikzset{vertex/.style = {shape=circle,draw,minimum size=1.5em}} [xscale=2, yscale=2, auto=left,every node/.style={circle,fill=blue!20}] 
\node[vertex,accepting,initial] (n0) at (11,5.5) {$0$};
\node[vertex] (n1) at (14,5.5) {$1$};
\draw[edge, bend right=10] (n0) edge node[below] {$(0,2)$} (n1);
\draw[edge, bend right=10] (n1) edge node[above] {$(1,0)$} (n0);
\draw[edge](n0) to[loop] node[pos=0.75,left] {$(2,1)$} (n0);
\end{tikzpicture}
&
$\Gamma_3$
\\\hline

$C_4$ &
\begin{tikzpicture} \tikzset{edge/.style = {->,> = latex'}} \tikzset{vertex/.style = {shape=circle,draw,minimum size=1.5em}} [xscale=2, yscale=2, auto=left,every node/.style={circle,fill=blue!20}] 
\node[vertex] (n1) at (6.000001326794896,5.99999999999912) {$1$}; 
\node[vertex] (n2) at (5.00000000000352,5.000002653589793) {$2$}; 
\node[vertex] (n3) at (5.99999203923062,4.0000000000158433) {$3$}; 
\draw[edge](n1) to (n2); 
\draw[edge](n2) to (n3); 
\draw[edge](n3) to (n1); 
\end{tikzpicture} 

&

\begin{tikzpicture} \tikzset{edge/.style = {->,> = latex'}} \tikzset{vertex/.style = {shape=circle,draw,minimum size=1.5em}} [xscale=2, yscale=2, auto=left,every node/.style={circle,fill=blue!20}] 

\node[vertex,accepting,initial] (n0) at (11,5.5) {$0$};
\node[vertex] (n1) at (14,5.5) {$1$};
\draw[edge, bend right=10] (n0) edge node[below] {$(2,3)$} (n1);
\draw[edge, bend right=10] (n1) edge node[above] {$(3,1)$} (n0);
\draw[edge](n1) to[loop] node[pos=0.25,right] {$(1,2)$} (n1);
\end{tikzpicture} 
&
$\Gamma_4$
\\\hline

$C_5$ &
\begin{tikzpicture} \tikzset{edge/.style = {->,> = latex'}} \tikzset{vertex/.style = {shape=circle,draw,minimum size=1.5em}} [xscale=2, yscale=2, auto=left,every node/.style={circle,fill=blue!20}] 
\node[vertex] (n0) at (7,5) {$0$}; 
\node[vertex] (n1) at (6.000001326794896,5.99999999999912) {$1$}; 
\node[vertex] (n2) at (5.00000000000352,5.000002653589793) {$2$}; 
\node[vertex] (n3) at (5.99999203923062,4.0000000000158433) {$3$}; 
\draw[edge](n0) to (n2); 
\draw[edge](n1) to (n0); 
\draw[edge](n2) to (n3); 
\draw[edge](n3) to (n1); 
\end{tikzpicture} 

&

\begin{tikzpicture} \tikzset{edge/.style = {->,> = latex'}} \tikzset{vertex/.style = {shape=circle,draw,minimum size=1.5em}} [xscale=2, yscale=2, auto=left,every node/.style={circle,fill=blue!20}] 
\node[vertex,accepting,initial] (n0) at (11,5.5) {$0$};
\node[vertex] (n1) at (14,5.5) {$1$};
\draw[edge, bend right=10] (n0) edge node[below] {$(0,2),(2,3)$} (n1);
\draw[edge, bend right=10] (n1) edge node[above] {$(1,0),(3,1)$} (n0);
\end{tikzpicture} 
&
$\Gamma_5$
\\\hline
\end{tabular}
\captionof{table}{Cycles of the $(2,4)$-mother graph and their corresponding cycle images.}
\label{2_4_ci}
\end{scriptsize}
\end{center}
Any union of the cycle images, $\Gamma_j,$ in Table \ref{2_4_ci} which results in a strongly-connected graph containing the zero state corresponds to a multiset union of cycles, $C_j,$  from which we may generate permutiple strings by ordering the inputs into $L$-walks on $\Gamma,$ seen in Figure \ref{2_4_hsg}. In this way, the above gives a complete description of how $(2,4)$-permutiple strings are constructed. 

For example, it is plain to see that the images of each individual multiset, $C_0,$ $C_3,$ $C_4,$ and $C_5,$ are strongly connected and contain the zero state. That is, these cycles individually contain inputs which enable $L$-walks on $\Gamma,$ allowing us to form permutiple strings. Examples are included in Table \ref{2_4_indiv}.
\begin{center}
 \begin{tabular}{|c|l|l|}
 \hline
 Cycle & Permutiple String & Example\\\hline
$C_0$ & $(0,0)$ &$(0)_4=2\cdot(0)_4$\\\hline
$C_3$ & $(2,1)(0,2)(1,0)$ & $(1,0,2)_4=2\cdot(0,2,1)_4$\\
& $(0,2)(1,0)(2,1)$ &$(2,1,0)_4=2\cdot(1,0,2)_4$\\\hline
$C_4$ & $(2,3)(1,2)(3,1)$ & $(3,1,2)_4=2\cdot(1,2,3)_4$\\\hline
$C_5$ & $(0,2)(1,0)(2,3)(3,1)$& $(3,2,1,0)_4=2\cdot(1,3,0,2)_4$\\
 & $(0,2)(3,1)(2,3)(1,0)$& $(1,2,3,0)_4=2\cdot(0,3,1,2)_4$\\
 & $(2,3)(1,0)(0,2)(3,1)$& $(3,0,1,2)_4=2\cdot(1,2,0,3)_4$\\
 & $(2,3)(3,1)(0,2)(1,0)$& $(1,0,3,2)_4=2\cdot(0,2,1,3)_4$\\\hline
\end{tabular} 
\captionof{table}{Examples of $(2,4)$-permutiples corresponding to single mother-graph cycles, $C_0,$ $C_3,$ $C_4,$ and $C_5.$}
\label{2_4_indiv}
\end{center}
It is also clear that unions involving only $C_1$ or $C_2$ are insufficient for this same purpose; $\Gamma_1$ is strongly connected, but does not contain the zero state, and $\Gamma_2$ is not strongly connected. We also see that any union involving the multisets $C_3,$ $C_4,$ and $C_5$ can be ordered to form $L$-walks on $\Gamma$ since the associated cycle-image union is strongly connected and contains the zero state. For example, the multiset union $C_2 \cup C_3=\{(1,2),(2,1),(0,2),(2,1),(1,0)\}$ gives a collection of inputs with which we may form permutiple strings. Examples are included in Table \ref{2_4_union_1}
\begin{center}
 \begin{tabular}{|c|l|l|}
 \hline
 Cycle & Permutiple String & Example\\\hline
$C_2 \cup C_3$ & $(2,1)(0,2)(1,2)(1,0)(2,1)$ &$(2,1,1,0,2)_4=2\cdot(1,0,2,2,1)_4$\\
& $(2,1)(2,1)(0,2)(1,2)(1,0)$ &$(1,1,0,2,2)_4=2\cdot(0,2,2,1,1)_4$\\
& $(0,2)(1,2)(1,0)(2,1)(2,1)$ &$(2,2,1,1,0)_4=2\cdot(1,1,0,2,2)_4$\\\hline
\end{tabular}
\captionof{table}{Examples of $(2,4)$-permutiples corresponding to the multiset union of mother-graph cycles $C_2 \cup C_3.$}
\label{2_4_union_1}
\end{center}
As mentioned above, cycles may be used more than once. For example, the multiset union $C_3 \cup C_3=\{(0,2),(2,1),(1,0),(0,2),(2,1),(1,0)\}$ gives us several six-digit examples, which are displayed in Table \ref{2_4_union_2}.
\begin{center}
 \begin{tabular}{|c|l|l|}
 \hline
Cycle & Permutiple String & Example\\\hline
$C_3 \cup C_3$ & $(2,1)(0,2)(1,0)(2,1)(0,2)(1,0)$ &$(1,0,2,1,0,2)_4=2\cdot(0,2,1,0,2,1)_4$\\
& $(0,2)(1,0)(2,1)(0,2)(1,0)(2,1)$ & $(2,1,0,2,1,0)_4=2\cdot(1,0,2,1,0,2)_4$\\
& $ (2,1)(0,2)(1,0)(0,2)(1,0)(2,1)$ & $(2,1,0,1,0,2)_4=2\cdot(1,0,2,0,2,1)_4$\\
& $ (2,1)(2,1)(0,2)(1,0)(0,2)(1,0)$ & $(1,0,1,0,2,2)_4=2\cdot(0,2,0,2,1,1)_4$\\
& $(0,2)(1,0)(0,2)(1,0)(2,1)(2,1)$ & $(2,2,1,0,1,0)_4=2\cdot(1,1,0,2,0,2)_4$\\
& $(0,2)(1,0)(2,1)(2,1)(0,2)(1,0)$ & $(1,0,2,2,1,0)_4=2\cdot(0,2,1,1,0,2)_4$\\
\hline
\end{tabular}
\captionof{table}{Examples of $(2,4)$-permutiples corresponding to the multiset union of mother-graph cycles $C_3 \cup C_3.$}
\label{2_4_union_2}
\end{center}
In the above way, all nontrivial $(2,4)$-permutiple strings can be formed from multiset unions of cycles of $M$ which contain at least one copy of $C_3,$ $C_4,$ or $C_5.$ 
\end{example}

We now perform a similar analysis to find examples of $(3,4)$-permutiples.

\begin{example}\label{three_four}
 We begin by examining the $(3,4)$-mother graph, $M,$ displayed in Figure \ref{3_4_mg}.
 \begin{center}
\begin{tikzpicture} 
\tikzset{edge/.style = {->,> = latex'}} \tikzset{vertex/.style = {shape=circle,draw,minimum size=1.5em}} [xscale=2, yscale=2, auto=left,every node/.style={circle,fill=blue!20}] \node[vertex] (n0) at (12,5) {$0$}; 
\node[vertex] (n1) at (10.000002653589794,6.99999999999824) {$1$}; \node[vertex] (n2) at (8.000000000007041,5.0000053071795865) {$2$}; \node[vertex] (n3) at (9.99999203923062,3.0000000000158433) {$3$}; 
\draw[edge](n0) to[loop] (n0); 
\draw[edge, bend right=10](n0) to (n1); 
\draw[edge, bend right=10](n0) to (n2); 
\draw[edge, bend right=10](n1) to (n0); 
\draw[edge](n1) to[loop] (n1); 
\draw[edge, bend right=10](n1) to (n3); 
\draw[edge, bend right=10](n2) to (n0); 
\draw[edge](n2) to[loop] (n2); 
\draw[edge, bend right=10](n2) to (n3); 
\draw[edge, bend right=10](n3) to (n1); 
\draw[edge, bend right=10](n3) to (n2); 
\draw[edge](n3) to[loop] (n3); 
\end{tikzpicture}
\captionof{figure}{The $(3,4)$-mother graph.}
\label{3_4_mg}.
\end{center}
The collection of cycles, $\mathscr{C},$ is listed below:
\[
 \begin{array}{l}
C_0=(0) =\{(0,0)\},\\
C_1=(1) =\{(1,1)\},\\
C_2=(2) =\{(2,2)\},\\
C_3=(3) =\{(3,3)\},\\
C_4=(0,1) =\{(0,1),(1,0)\},\\
C_5=(0,2) =\{(0,2),(2,0)\},\\
C_6=(1,3) =\{(1,3),(3,1)\},\\
C_7=(2,3) =\{(2,3),(3,2)\},\\
C_8=(0,1,3,2) =\{(0,1),(1,3),(3,2),(2,0)\},\\
C_9=(0,2,3,1) =\{(0,2),(2,3),(3,1),(1,0)\}.\\
\end{array}
\]
We now look for multiset unions of the above cycles from which we may construct members of $L.$ The $(3,4)$-Hoey-Sloane graph, $\Gamma,$ is seen in Figure \ref{3_4_hsg}.
\begin{center}
\begin{tikzpicture}
\tikzset{edge/.style = {->,> = latex'}}
\tikzset{vertex/.style = {shape=circle,draw,minimum size=1.5em}}
[xscale=2, yscale=2, auto=left,every node/.style={circle,fill=blue!20}]
\node[vertex,initial,accepting] (n0) at (0,5) {$0$};
\node[vertex] (n1) at (3.2,5) {$1$};
\node[vertex] (n2) at (6.4,5) {$2$};
\draw[edge](n0) to[loop] node[above] {\footnotesize{$(0,0),(3,1)$}} (n0);
\draw[edge](n1) to[loop] node[above] {\footnotesize{$(0,1),(3,2)$}} (n1);
\draw[edge](n2) to[loop] node[above] {\footnotesize{$(0,2),(3,3)$}} (n2);
\draw[edge, bend right=10](n0) edge node[below] {\footnotesize{$(2,2)$}} (n1);
\draw[edge, bend right=10](n1) edge node[above] {\footnotesize{$(1,0)$}} (n0);
\draw[edge, bend right=10](n2) edge node[above] {\footnotesize{$(1,1)$}} (n1);
\draw[edge, bend right=10](n1) edge node[below] {\footnotesize{$(2,3)$}} (n2);
\draw[edge, bend right=50](n0) edge node[below] {\footnotesize{$(1,3)$}} (n2);
\draw[edge, bend right=70](n2) edge node[above] {\footnotesize{$(2,0)$}} (n0);
\end{tikzpicture}
\captionof{figure}{The $(3,4)$-Hoey-Sloane graph.}
\label{3_4_hsg}
\end{center}
To accomplish the objective stated above, we compare individual mother-graph cycles to their images as seen in Table \ref{3_4_ci}.
\begin{center}
\begin{scriptsize}
\begin{tabular}{|c|c|l|c|}
\hline
& {\bf Mother-Graph Cycle} & {\bf Cycle Image}&\\\hline

$C_0$ &
\begin{tikzpicture} \tikzset{edge/.style = {->,> = latex'}} \tikzset{vertex/.style = {shape=circle,draw,minimum size=1.5em}} [xscale=2, yscale=2, auto=left,every node/.style={circle,fill=blue!20}] 
\node[vertex] (n0) at (7,5) {$0$}; 
\draw[edge](n0) to[loop] (n0); 
\end{tikzpicture} 

&

\begin{tikzpicture} \tikzset{edge/.style = {->,> = latex'}} \tikzset{vertex/.style = {shape=circle,draw,minimum size=1.5em}} [xscale=2, yscale=2, auto=left,every node/.style={circle,fill=blue!20}] 
\node[vertex,initial,accepting] (n0) at (0,5) {$0$};
\node[vertex,white] (n1) at (2,5) {$1$};
\node[vertex,white] (n2) at (4,5) {$2$};
\draw[edge](n0) to[loop] node[right,pos=0.25] {\footnotesize $(0,0)$} (n0);
\end{tikzpicture}
&
$\Gamma_0$
\\\hline

$C_1$ &
\begin{tikzpicture} \tikzset{edge/.style = {->,> = latex'}} \tikzset{vertex/.style = {shape=circle,draw,minimum size=1.5em}} [xscale=2, yscale=2, auto=left,every node/.style={circle,fill=blue!20}] 
\node[vertex] (n1) at (6.000001326794896,5.99999999999912) {$1$};  
\draw[edge](n1) to[loop] (n1); 
\end{tikzpicture} 

&

\begin{tikzpicture} \tikzset{edge/.style = {->,> = latex'}} \tikzset{vertex/.style = {shape=circle,draw,minimum size=1.5em}} [xscale=2, yscale=2, auto=left,every node/.style={circle,fill=blue!20}] 
\color{white}\node[vertex,initial,accepting] (n0) at (0,5) {$0$};
\node[vertex,black] (n1) at (2,5) {$1$};
\node[vertex,black] (n2) at (4,5) {$2$};
 \draw[edge,black,bend right=10](n2) edge node[above] {\footnotesize $(1,1)$} (n1);
\end{tikzpicture} 
&
$\Gamma_1$
\\\hline

$C_2$ &
\begin{tikzpicture} \tikzset{edge/.style = {->,> = latex'}} \tikzset{vertex/.style = {shape=circle,draw,minimum size=1.5em}} [xscale=2, yscale=2, auto=left,every node/.style={circle,fill=blue!20}]  
\node[vertex] (n2) at (5.00000000000352,5.000002653589793) {$2$}; 
 \draw[edge](n2) to[loop] (n2); 
\end{tikzpicture} 

&

\begin{tikzpicture} \tikzset{edge/.style = {->,> = latex'}} \tikzset{vertex/.style = {shape=circle,draw,minimum size=1.5em}} [xscale=2, yscale=2, auto=left,every node/.style={circle,fill=blue!20}] 
\node[vertex,initial,accepting] (n0) at (0,5) {$0$};
\node[vertex] (n1) at (2,5) {$1$};
\node[vertex,white] (n2) at (4,5) {$2$};
 \draw[edge, bend right=10](n0) edge node[below] {\footnotesize $(2,2)$} (n1);
\end{tikzpicture}
&
$\Gamma_2$
\\\hline

$C_3$ &
\begin{tikzpicture} \tikzset{edge/.style = {->,> = latex'}} \tikzset{vertex/.style = {shape=circle,draw,minimum size=1.5em}} [xscale=2, yscale=2, auto=left,every node/.style={circle,fill=blue!20}] 
 \node[vertex] (n3) at (5.99999203923062,4.0000000000158433) {$3$};  
\draw[edge](n3) to[loop] (n3);
\end{tikzpicture} 

&
\color{white}
\begin{tikzpicture} \tikzset{edge/.style = {->,> = latex'}} \tikzset{vertex/.style = {shape=circle,draw,minimum size=1.5em}} [xscale=2, yscale=2, auto=left,every node/.style={circle,fill=blue!20}] 
\node[vertex,initial,accepting] (n0) at (0,5) {$0$};
\node[vertex] (n1) at (2,5) {$1$};
\node[vertex,black] (n2) at (4,5) {$2$};
 \draw[edge,black](n2) to[loop] node[left,pos=0.75] {\footnotesize $(3,3)$} (n2);
\end{tikzpicture}
&
$\Gamma_3$
\\\hline

$C_4$ &
\begin{tikzpicture} \tikzset{edge/.style = {->,> = latex'}} \tikzset{vertex/.style = {shape=circle,draw,minimum size=1.5em}} [xscale=2, yscale=2, auto=left,every node/.style={circle,fill=blue!20}] 
\node[vertex] (n0) at (7,5) {$0$}; 
\node[vertex] (n1) at (6.000001326794896,5.99999999999912) {$1$};  
\draw[edge, bend right=10](n1) to (n0); 
\draw[edge, bend right=10](n0) to (n1); 
\end{tikzpicture} 

&

\begin{tikzpicture} \tikzset{edge/.style = {->,> = latex'}} \tikzset{vertex/.style = {shape=circle,draw,minimum size=1.5em}} [xscale=2, yscale=2, auto=left,every node/.style={circle,fill=blue!20}] 
\node[vertex,initial,accepting] (n0) at (0,5) {$0$};
\node[vertex] (n1) at (2,5) {$1$};
\node[vertex,white] (n2) at (4,5) {$2$};
\draw[edge](n1) to[loop] node[above] {\footnotesize $(0,1)$} (n1);
\draw[edge, bend right=10](n1) edge node[above] {\footnotesize $(1,0)$} (n0);
\end{tikzpicture}
&
$\Gamma_4$
\\\hline

$C_5$ &
\begin{tikzpicture} \tikzset{edge/.style = {->,> = latex'}} \tikzset{vertex/.style = {shape=circle,draw,minimum size=1.5em}} [xscale=2, yscale=2, auto=left,every node/.style={circle,fill=blue!20}] 
\node[vertex] (n0) at (7,5) {$0$}; 
\node[vertex] (n2) at (5.00000000000352,5.000002653589793) {$2$};  
\draw[edge, bend right=10](n0) to (n2); 
\draw[edge, bend right=10](n2) to (n0); 
\end{tikzpicture} 

&

\begin{tikzpicture} \tikzset{edge/.style = {->,> = latex'}} \tikzset{vertex/.style = {shape=circle,draw,minimum size=1.5em}} [xscale=2, yscale=2, auto=left,every node/.style={circle,fill=blue!20}] 
\node[vertex,initial,accepting] (n0) at (0,5) {$0$};
\node[vertex,white] (n1) at (2,5) {$1$};
\node[vertex] (n2) at (4,5) {$2$};
\draw[edge](n2) to[loop] node[right,pos=0.25] {\footnotesize $(0,2)$} (n2);
\draw[edge, bend right=20](n2) edge node[above] {\footnotesize $(2,0)$} (n0);
\end{tikzpicture}
&
$\Gamma_5$
\\\hline

$C_6$ &
\begin{tikzpicture} \tikzset{edge/.style = {->,> = latex'}} \tikzset{vertex/.style = {shape=circle,draw,minimum size=1.5em}} [xscale=2, yscale=2, auto=left,every node/.style={circle,fill=blue!20}] 
\node[vertex] (n1) at (6.000001326794896,5.99999999999912) {$1$}; 
\node[vertex] (n3) at (5.99999203923062,4.0000000000158433) {$3$};  
\draw[edge, bend right=10](n3) to (n1); 
\draw[edge, bend right=10](n1) to (n3); 
\end{tikzpicture} 

&

\begin{tikzpicture} \tikzset{edge/.style = {->,> = latex'}} \tikzset{vertex/.style = {shape=circle,draw,minimum size=1.5em}} [xscale=2, yscale=2, auto=left,every node/.style={circle,fill=blue!20}] 
\node[vertex,initial,accepting] (n0) at (0,5) {$0$};
\node[vertex,white] (n1) at (2,5) {$1$};
\node[vertex] (n2) at (4,5) {$2$};
\draw[edge](n0) to[loop] node[above] {\footnotesize $(3,1)$} (n0);
\draw[edge, bend right=20](n0) edge node[below] {\footnotesize $(1,3)$} (n2);
\end{tikzpicture} 
&
$\Gamma_6$
\\\hline

$C_7$ &
\begin{tikzpicture} \tikzset{edge/.style = {->,> = latex'}} \tikzset{vertex/.style = {shape=circle,draw,minimum size=1.5em}} [xscale=2, yscale=2, auto=left,every node/.style={circle,fill=blue!20}] 
\node[vertex] (n2) at (5.00000000000352,5.000002653589793) {$2$}; 
\node[vertex] (n3) at (5.99999203923062,4.0000000000158433) {$3$}; 
\draw[edge, bend right=10](n2) to (n3); 
\draw[edge, bend right=10](n3) to (n2); 
\end{tikzpicture} 

&
\color{white}
\begin{tikzpicture} \tikzset{edge/.style = {->,> = latex'}} \tikzset{vertex/.style = {shape=circle,draw,minimum size=1.5em}} [xscale=2, yscale=2, auto=left,every node/.style={circle,fill=blue!20}] 
\node[vertex,initial,accepting] (n0) at (0,5) {$0$};
\node[vertex,black] (n1) at (2,5) {$1$};
\node[vertex,black] (n2) at (4,5) {$2$};
\draw[edge,black](n1) to[loop] node[left,pos=0.75] {\footnotesize $(3,2)$} (n1);
\draw[edge, black, bend right=10](n1) edge node[below] {\footnotesize $(2,3)$} (n2);
\end{tikzpicture}
&
$\Gamma_7$
\\\hline

$C_8$ &
\begin{tikzpicture} \tikzset{edge/.style = {->,> = latex'}} \tikzset{vertex/.style = {shape=circle,draw,minimum size=1.5em}} [xscale=2, yscale=2, auto=left,every node/.style={circle,fill=blue!20}] 
\node[vertex] (n0) at (7,5) {$0$}; 
\node[vertex] (n1) at (6.000001326794896,5.99999999999912) {$1$}; 
\node[vertex] (n2) at (5.00000000000352,5.000002653589793) {$2$}; 
\node[vertex] (n3) at (5.99999203923062,4.0000000000158433) {$3$};  
\draw[edge](n2) to (n0); 
\draw[edge](n0) to (n1); 
\draw[edge](n3) to (n2); 
\draw[edge](n1) to (n3); 
\end{tikzpicture} 

&

\begin{tikzpicture} \tikzset{edge/.style = {->,> = latex'}} \tikzset{vertex/.style = {shape=circle,draw,minimum size=1.5em}} [xscale=2, yscale=2, auto=left,every node/.style={circle,fill=blue!20}] 
\node[vertex,initial,accepting] (n0) at (0,5) {$0$};
\node[vertex] (n1) at (2,5) {$1$};
\node[vertex] (n2) at (4,5) {$2$};
\draw[edge](n1) to[loop] node[above] {\footnotesize $(0,1),(3,2)$} (n1);
\draw[edge, bend right=20](n0) edge node[below] {\footnotesize $(1,3)$} (n2);
\draw[edge, bend right=80](n2) edge node[above] {\footnotesize $(2,0)$} (n0);
\end{tikzpicture} 
&
$\Gamma_8$
\\\hline

$C_9$ &
\begin{tikzpicture} \tikzset{edge/.style = {->,> = latex'}} \tikzset{vertex/.style = {shape=circle,draw,minimum size=1.5em}} [xscale=2, yscale=2, auto=left,every node/.style={circle,fill=blue!20}] 
\node[vertex] (n0) at (7,5) {$0$}; 
\node[vertex] (n1) at (6.000001326794896,5.99999999999912) {$1$}; 
\node[vertex] (n2) at (5.00000000000352,5.000002653589793) {$2$}; 
\node[vertex] (n3) at (5.99999203923062,4.0000000000158433) {$3$};  
\draw[edge](n0) to (n2); 
\draw[edge](n1) to (n0); 
\draw[edge](n2) to (n3); 
\draw[edge](n3) to (n1); 
\end{tikzpicture} 

&

\begin{tikzpicture} \tikzset{edge/.style = {->,> = latex'}} \tikzset{vertex/.style = {shape=circle,draw,minimum size=1.5em}} [xscale=2, yscale=2, auto=left,every node/.style={circle,fill=blue!20}] 
\node[vertex,initial,accepting] (n0) at (0,5) {$0$};
\node[vertex] (n1) at (2,5) {$1$};
\node[vertex] (n2) at (4,5) {$2$};
\draw[edge](n0) to[loop] node[above] {\footnotesize $(3,1)$} (n0);
\draw[edge](n2) to[loop] node[above] {\footnotesize $(0,2)$} (n2);
\draw[edge, bend right=10](n1) edge node[above] {\footnotesize $(1,0)$} (n0);
\draw[edge, bend right=10](n1) edge node[below] {\footnotesize $(2,3)$} (n2);
\end{tikzpicture} 
&
$\Gamma_9$
\\\hline

\end{tabular}
\captionof{table}{Cycles of the $(3,4)$-mother graph and their corresponding cycle images.}
\label{3_4_ci}
\end{scriptsize}
\end{center}
We may now consider multiset unions of mother-graph cycles whose corresponding cycle-image union results in a strongly-connected graph containing the zero state. In this way, Table \ref{3_4_ci} describes how to form any $(3,4)$-permutiple string. For instance, the cycle union $C_5 \cup C_6=\{(0,2),(2,0),(1,3),(3,1)\}$ corresponds to the strongly-connected cycle-image union $\Gamma_5\cup \Gamma_6,$ which is shown in Figure \ref{3_4_union}.
\begin{center}
\begin{tikzpicture} \tikzset{edge/.style = {->,> = latex'}} \tikzset{vertex/.style = {shape=circle,draw,minimum size=1.5em}} [xscale=2, yscale=2, auto=left,every node/.style={circle,fill=blue!20}] 
\node[vertex,initial,accepting] (n0) at (0,5) {$0$};
\node[vertex,white] (n1) at (3.5,5) {$1$};
\node[vertex] (n2) at (7,5) {$2$};
\draw[edge](n0) to[loop] node[above] {$(3,1)$} (n0);
\draw[edge](n2) to[loop] node[above] {$(0,2)$} (n2);
\draw[edge, bend right=10](n0) edge node[below] { $(1,3)$} (n2);
\draw[edge, bend right=10](n2) edge node[above] {$(2,0)$} (n0);
\end{tikzpicture}
\captionof{figure}{The cycle-image union $\Gamma_5\cup \Gamma_6$ corresponding to the mother-graph cycle union $C_5 \cup C_6.$}
\label{3_4_union}
\end{center}
From Figure \ref{3_4_union}, we may easily generate $L$-walks on $\Gamma,$ two of which are displayed in Figure \ref{3_4_L_walks}.
\begin{center}
\begin{tikzpicture}
\tikzset{edge/.style = {->,> = latex'}}
\tikzset{vertex/.style = {shape=circle,draw,minimum size=1.5em}}
[xscale=2, yscale=2, auto=left,every node/.style={circle,fill=blue!20}]
\node[vertex] (n0) at (0,5) {$0$};
\node[vertex] (n1) at (2,5) {$2$};
\node[vertex] (n2) at (4,5) {$2$};
\node[vertex] (n3) at (6,5) {$0$};
\node[vertex] (n4) at (8,5) {$0$};
\draw[edge] (n0) edge node[above] {$(1,3)$} (n1);
\draw[edge] (n1) edge node[above] {$(0,2)$} (n2);
\draw[edge] (n2) edge node[above] {$(2,0)$} (n3);
\draw[edge] (n3) edge node[above] {$(3,1)$} (n4);

\node[vertex] (m0) at (0,6.3) {$0$};
\node[vertex] (m1) at (2,6.3) {$0$};
\node[vertex] (m2) at (4,6.3) {$2$};
\node[vertex] (m3) at (6,6.3) {$2$};
\node[vertex] (m4) at (8,6.3) {$0$};
\draw[edge] (m0) edge node[above] {$(3,1)$} (m1);
\draw[edge] (m1) edge node[above] {$(1,3)$} (m2);
\draw[edge] (m2) edge node[above] {$(0,2)$} (m3);
\draw[edge] (m3) edge node[above] {$(2,0)$} (m4);
\end{tikzpicture}
\captionof{figure}{Two $L$-walks whose input-sequence elements are the mother-graph union $C_5 \cup C_6.$}
\label{3_4_L_walks}
\end{center}
The $L$-walks displayed in Figure \ref{3_4_L_walks} correspond to two $(3,4)$-permutiple strings, $(1,3)(0,2)(2,0)(3,1)$ and $(1,3)(0,2)(2,0)(3,1),$ which, in turn, correspond to the two $(3,4)$-permutiple examples $(3,2,0,1)_4=3 \cdot (1,0,2,3)_4$ and $(2,0,1,3)_4=3 \cdot (0,2,3,1)_4.$ We notice that certain cyclic permutations of the inputs in a permutiple string result in another permutiple string. This will happen so long as the first input transitions from the zero state and the last input transitions to the zero state. The reader will also notice that, unlike the previous example, the trivial cycle, $C_0,$ is the only individual cycle  whose image is strongly connected and contains the zero state. 
\end{example}   

Clearly, the permutiples in a class, $C,$ can be generated from the subgraph of $\Gamma$ formed by the union of the images of the cycles of $G_C.$ The edge labels of this subgraph are the cycles of $G_C.$ Our next example highlights this fact.

\begin{example}
\label{4_10}
We will now describe the permutiple class, $C,$ of a familiar example, $p=(8,7,9,1,2)_{10}=4\cdot(2,1,9,7,8)_{10},$ as a subcollection of all $(4,10)$-permutiples.  The $(4,10)$-mother graph, $M,$ is seen in Figure \ref{4_10_mg}, with the edges of $G_C=G_p$ highlighted in red. The cycles of $G_C$ are $C_0=\{(9,9)\},$ $C_1=\{(2,8),(2,8)\},$ and $C_2=\{(1,7),(7,1)\}.$ The $(4,10)$-Hoey-Sloane graph, $\Gamma,$ is seen in Figure \ref{4_10_hsg} with the cycle-image union $\Gamma_0 \cup \Gamma_1 \cup \Gamma_2$ also highlighted in red. We may describe $C$ as all permutiples which are generated from a permutiple string formed from a multiset union involving at least one copy of $\{(2,8),(2,8),(1,7),(7,1)\}.$ 
\begin{center}
\begin{tikzpicture}
\tikzset{edge/.style = {->,> = latex'}}
\tikzset{vertex/.style = {shape=circle,draw,minimum size=1.5em}}
[xscale=2, yscale=2, auto=left,every node/.style={circle,fill=blue!20}]
\node[vertex] (n0) at (13,5) {$0$};
\node[vertex,red] (n1) at (12.427051918969068,6.763354468797628) {$1$};
\node[vertex,red] (n2) at (10.927054011580958,7.853168564878643) {$2$};
\node[vertex] (n3) at (9.07295355956129,7.853171024889661) {$3$};
\node[vertex] (n4) at (7.5729527602588975,6.76336090919162) {$4$};
\node[vertex] (n5) at (7.000000000010562,5.00000796076938) {$5$};
\node[vertex] (n6) at (7.572943401820056,3.236651971608781) {$6$};
\node[vertex,red] (n7) at (9.072938417283323,2.1468338951524664) {$7$};
\node[vertex,red] (n8) at (10.927038869289936,2.1468265151194124) {$8$};
\node[vertex,red] (n9) at (12.427042560496046,3.236632650426805) {$9$};
\draw[edge](n0) to[loop] (n0);
\draw[edge, bend right=5](n0) to (n2);
\draw[edge, bend right=0](n0) to (n5);
\draw[edge, bend right=5](n0) to (n7);
\draw[edge, bend right=0](n1) to (n0);
\draw[edge, bend right=0](n1) to (n2);
\draw[edge, bend right=5](n1) to (n5);
\draw[edge, red, bend right=5](n1) to (n7);
\draw[edge, bend right=5](n2) to (n0);
\draw[edge, bend right=0](n2) to (n3);
\draw[edge, bend right=0](n2) to (n5);
\draw[edge, red, bend right=5](n2) to (n8);
\draw[edge, bend right=0](n3) to (n0);
\draw[edge](n3) to[loop] (n3);
\draw[edge, bend right=5](n3) to (n5);
\draw[edge, bend right=0](n3) to (n8);
\draw[edge, bend right=0](n4) to (n1);
\draw[edge, bend right=0](n4) to (n3);
\draw[edge, bend right=5](n4) to (n6);
\draw[edge, bend right=5](n4) to (n8);
\draw[edge, bend right=5](n5) to (n1);
\draw[edge, bend right=5](n5) to (n3);
\draw[edge, bend right=0](n5) to (n6);
\draw[edge, bend right=0](n5) to (n8);
\draw[edge, bend right=0](n6) to (n1);
\draw[edge, bend right=5](n6) to (n4);
\draw[edge](n6) to[loop] (n6);
\draw[edge, bend right=0](n6) to (n9);
\draw[edge, red, bend right=5](n7) to (n1);
\draw[edge, bend right=0](n7) to (n4);
\draw[edge, bend right=0](n7) to (n6);
\draw[edge, bend right=5](n7) to (n9);
\draw[edge, red, bend right=5](n8) to (n2);
\draw[edge, bend right=5](n8) to (n4);
\draw[edge, bend right=0](n8) to (n7);
\draw[edge, bend right=0](n8) to (n9);
\draw[edge, bend right=0](n9) to (n2);
\draw[edge, bend right=0](n9) to (n4);
\draw[edge, bend right=5](n9) to (n7);
\draw[edge,red](n9) to[loop] (n9);
\end{tikzpicture}
\captionof{figure}{The $(4,10)$-mother graph.}
\label{4_10_mg}
\end{center}

\begin{center}
\begin{tikzpicture}
\tikzset{edge/.style = {->,> = latex'}}
\tikzset{vertex/.style = {shape=circle,draw,minimum size=1.5em}}
[xscale=2, yscale=2, auto=left,every node/.style={circle,fill=blue!20}]
\node[vertex,red,initial,accepting] (n0) at (0,5) {$0$};
\node[vertex] (n1) at (3.0,5) {$1$};
\node[vertex] (n2) at (6.0,5) {$2$};
\node[vertex,red] (n3) at (9.0,5) {$3$};
\draw[edge,red](n0) to[loop] node[above] {\footnotesize{$\color{black}{(0,0), (4,1), }\,\color{red}{(8,2)}$}} (n0);
\draw[edge](n1) to[loop] node[above] {\footnotesize{$(3,3),(4,7)$}} (n1);
\draw[edge](n2) to[loop] node[above] {\footnotesize{$(2,5),(6,6)$}} (n2);
\draw[edge,red](n3) to[loop] node[above] {\footnotesize{$\color{red}{(1,7)}\color{black}{,(5,8),}\color{red}{(9,9)}$}} (n3);
\draw[edge, bend right=10](n0) edge node[below] {\footnotesize{$(2,3),(6,4)$}} (n1);
\draw[edge, bend right=10](n1) edge node[above] {\footnotesize{$(1,0),(5,1),(9,2)$}} (n0);
\draw[edge, bend right=10](n2) edge node[above] {\footnotesize{$(0,2),(4,3),(8,4)$}} (n1);
\draw[edge, bend right=10](n1) edge node[below] {\footnotesize{$(1,5),(5,6),(9,7)$}} (n2);
\draw[edge, bend right=50](n1) edge node[below] {\footnotesize{$(3,8),(7,9)$}} (n3);
\draw[edge, bend right=10](n2) edge node[below] {\footnotesize{$(0,7),(4,8),(8,9)$}} (n3);
\draw[edge, bend right=10](n3) edge node[above] {\footnotesize{$(3,5),(7,6)$}} (n2);
\draw[edge, bend right=70](n3) edge node[above] {\footnotesize{$(1,2),(5,3),(9,4)$}} (n1);

\draw[edge, bend right=50](n0) edge node[below] {\footnotesize{$(0,5),(4,6),(8,7)$}} (n2);
\draw[edge, red, bend right=60](n0) edge node[below] {\footnotesize{$(2,8)\color{black}{,(6,9)}$}} (n3);
\draw[edge, bend right=70](n2) edge node[above] {\footnotesize{$(2,0),(6,1)$}} (n0);
\draw[edge, red, bend right=85](n3) edge node[above] {\footnotesize{$\color{black}{(3,0),}\,\color{red}{(7,1)}$}} (n0);
\end{tikzpicture}
\captionof{figure}{The $(4,10)$-Hoey-Sloane graph.}
\label{4_10_hsg}
\end{center}
Here, we also note that Theorem \ref{conj_class}, when viewed through the lens of the present discussion, tells us that permutiple conjugates arise by either cyclically permuting the sequence of states and inputs simultaneously, or by transposing inputs in a way which preserves the sequence of states. Any combination of the above actions which produces another $L$-walk will result in a conjugate permutiple string. For example, consider the $L$-walk for $p$ shown in Figure \ref{4_10_L_walk_1}.

\begin{center}
\begin{tikzpicture}
\tikzset{edge/.style = {->,> = latex'}}
\tikzset{vertex/.style = {shape=circle,draw,minimum size=1.5em}}
[xscale=2, yscale=2, auto=left,every node/.style={circle,fill=blue!20}]
\node[vertex] (n0) at (0,5) {$0$};
\node[vertex] (n1) at (2,5) {$3$};
\node[vertex] (n2) at (4,5) {$3$};
\node[vertex] (n3) at (6,5) {$3$};
\node[vertex] (n4) at (8,5) {$0$};
\node[vertex] (n5) at (10,5) {$0$};
\draw[edge] (n0) edge node[above] {$(2,8)$} (n1);
\draw[edge] (n1) edge node[above] {$(1,7)$} (n2);
\draw[edge] (n2) edge node[above] {$(9,9)$} (n3);
\draw[edge] (n3) edge node[above] {$(7,1)$} (n4);
\draw[edge] (n4) edge node[above] {$(8,2)$} (n5);
\end{tikzpicture}
\captionof{figure}{The $L$-walk corresponding to $p=(8,7,9,1,2)_{10}=4\cdot(2,1,9,7,8)_{10}.$}
\label{4_10_L_walk_1}
\end{center}
 One of the conjugates of $p$ is $(7,1,9,2,8)_{10}=4 \cdot (1,7,9,8,2)_{10},$ and its $L$-walk, shown in Figure \ref{4_10_L_walk_2}, is the result of both a cyclic permutation of the state-input sequence and a transposition of the inputs $(1,7)$ and $(9,9)$ on the $L$-walk of $p.$
\begin{center}
\begin{tikzpicture}
\tikzset{edge/.style = {->,> = latex'}}
\tikzset{vertex/.style = {shape=circle,draw,minimum size=1.5em}}
[xscale=2, yscale=2, auto=left,every node/.style={circle,fill=blue!20}]
\node[vertex] (n0) at (0,5) {$0$};
\node[vertex] (n1) at (2,5) {$0$};
\node[vertex] (n2) at (4,5) {$3$};
\node[vertex] (n3) at (6,5) {$3$};
\node[vertex] (n4) at (8,5) {$3$};
\node[vertex] (n5) at (10,5) {$0$};
\draw[edge] (n0) edge node[above] {$(8,2)$} (n1);
\draw[edge] (n1) edge node[above] {$(2,8)$} (n2);
\draw[edge] (n2) edge node[above] {$(9,9)$} (n3);
\draw[edge] (n3) edge node[above] {$(1,7)$} (n4);
\draw[edge] (n4) edge node[above] {$(7,1)$} (n5);
\end{tikzpicture}
\captionof{figure}{The $L$-walk corresponding to $(7,1,9,2,8)_{10}=4 \cdot (1,7,9,8,2)_{10}.$}
\label{4_10_L_walk_2}
\end{center}
\end{example} 
 
\begin{remark}\label{broad_class}
For an $(n,b)$-permutiple class, $C,$ mapping the cycles of $G_C$ to their cycle-image union, as seen in the above example, gives us options for classifying permutiples by graph isomorphism (including edge-label collections), which are independent of base, multiplier, and length. 
\end{remark}

Other examples of $(4,10)$-permutiples not considered in the above example, but worth mentioning, are $(7,8,9,1,2)_{10}=4\cdot (1,9,7,2,8)_{10},$ corresponding to the input string $(2,8)(1,2)(9,7)(8,9)(7,1),$ produced by the results of \cite{holt_4}, and an example from the results of \cite{qu}, $(4,9,3,8,2,7,1,5,6)_{10}=4\cdot(1,2,3,4,5,6,7,8,9)_{10},$ with input string $(6,9)(5,8)(1,7)(7,6)(2,5)(8,4)(3,3)(9,2)(4,1).$

As for describing all $(4,10)$-permutiples using the above techniques, we note that, using software, there are 986 cycles on the $(4,10)$-mother graph. This means that the type of analysis seen in the previous examples is hardly feasible. Without further results, we would need to utilize software to continue our exploration. That said, it is still not too difficult to use the Hoey-Sloane graph to find new examples by inspection. For instance, the example $(8,7,1,5,2,8,2,0)_{10}=4\cdot (2,1,7,8,8,2,0,5)_{10}$ was found from the permutiple string $(0,5)(2,0)(8,2)(2,8)(5,8)(1,7)(7,1)(8,2),$ which was obtained by simply examining $\Gamma.$

\subsection{Application to Palintiple Numbers}

In \cite{holt_3}, we find the remark, ``studying the general [permutiple] problem may very well offer insight into particular problems which study only one kind of permutation.'' It is worth mentioning that the methods presented here do indeed bring a novel and unifying perspective to more specific cases of digit permutation problems, namely, palintiple numbers.  

To find all $(n,b)$-palintiples, we may perform the same analysis as above, except that we examine only unions of 1- and 2-cycles of $M$ from which we may form elements of $L.$ It also stands out that for any input string which results in a palintiple, $(d_{0},d_{k})(d_{1},d_{k-1})\cdots(d_{k-1},d_{1})(d_{k},d_{0}),$ the digits, read from left to right, form a palindromic sequence, $d_{0}, d_{k}, d_{1}, d_{k-1}, \ldots,  d_{k-1}, d_{1}, d_{k}, d_{0}.$ 

\begin{example}\label{two_five}
As a simple example, we now describe all $(2,5)$-palintiples. The $(2,5)$-mother graph and the $(2,5)$-Hoey-Sloane graph are seen in Figures \ref{2_5_mg} and \ref{2_5_hsg}.
\begin{center}
 \begin{tikzpicture} \tikzset{edge/.style = {->,> = latex'}} \tikzset{vertex/.style = {shape=circle,draw,minimum size=1.5em}} [xscale=2, yscale=2, auto=left,every node/.style={circle,fill=blue!20}] \node[vertex] (n0) at (12,5) {$0$}; \node[vertex] (n1) at (10.618036007720638,6.9021123765857615) {$1$}; 
 \node[vertex] (n2) at (8.381968506839264,6.17557393946108) {$2$}; \node[vertex] (n3) at (8.381962267880038,3.8244346477391877) {$3$}; \node[vertex] (n4) at (10.618025912859958,3.0978843434129413) {$4$}; \draw[edge](n0) to[loop] (n0); 
 \draw[edge](n0) to (n2); 
 \draw[edge](n1) to (n0); 
 \draw[edge, bend right=10](n1) to (n3); 
 \draw[edge](n2) to (n1); 
 \draw[edge](n2) to (n3); 
 \draw[edge, bend right=10](n3) to (n1); 
 \draw[edge](n3) to (n4); 
 \draw[edge](n4) to (n2); 
 \draw[edge](n4) to[loop] (n4); 
 \end{tikzpicture} 
 \captionof{figure}{The $(2,5)$-mother graph.}
 \label{2_5_mg}
\end{center}

\begin{center}
\begin{tikzpicture}
\tikzset{edge/.style = {->,> = latex'}}
\tikzset{vertex/.style = {shape=circle,draw,minimum size=1.5em}}    
[xscale=2, yscale=2, auto=left,every node/.style={circle,fill=blue!20}]
\node[vertex,accepting,initial] (n0) at (7,5) {$0$};
\node[vertex] (n1) at (13,5) {$1$};
\draw[edge, bend right=10] (n0) edge node[below] {$(1,3),(3,4)$} (n1);
\draw[edge, bend right=10] (n1) edge node[above] {$(1,0),(3,1)$} (n0);
\draw[edge](n0) to[loop] node[above] {$(0,0),(2,1),(4,2)$} (n0);
\draw[edge](n1) to[loop] node[above] {$(0,2),(2,3),(4,4)$} (n1);
\end{tikzpicture}
 \captionof{figure}{The $(2,5)$-Hoey-Sloane graph.}
 \label{2_5_hsg}
\end{center}
The collection of 1- and 2-cycles are $C_0=(0)=\{(0,0)\},$ $C_1=(4)=\{(4,4)\},$ and $C_2=(1,3)=\{(1,3),(3,1)\},$ and their corresponding cycle images are displayed in Table \ref{2_5_ci}.
\begin{center}
\begin{scriptsize}
\begin{tabular}{|c|c|l|c|}
\hline
 & {\bf Mother-Graph Cycle} & {\bf Cycle Images}&\\\hline

$C_0$ &
\begin{tikzpicture} \tikzset{edge/.style = {->,> = latex'}} \tikzset{vertex/.style = {shape=circle,draw,minimum size=1.5em}} [xscale=2, yscale=2, auto=left,every node/.style={circle,fill=blue!20}] 
\node[vertex] (n0) at (4,5) {$0$}; 
\draw[edge](n0) to[loop] (n0); 
\end{tikzpicture} 
&
\begin{tikzpicture} \tikzset{edge/.style = {->,> = latex'}} \tikzset{vertex/.style = {shape=circle,draw,minimum size=1.5em}} [xscale=2, yscale=2, auto=left,every node/.style={circle,fill=blue!20}] 
\node[vertex,accepting,initial] (n0) at (11,5.5) {$0$};
\node[vertex,white] (n1) at (14,5.5) {$1$};
\draw[edge](n0) to[loop] node[pos=0.75,left] {$(0,0)$} (n0);
\end{tikzpicture} 
&
$\Gamma_0$
\\\hline

$C_1$ &
\begin{tikzpicture} \tikzset{edge/.style = {->,> = latex'}} \tikzset{vertex/.style = {shape=circle,draw,minimum size=1.5em}} [xscale=2, yscale=2, auto=left,every node/.style={circle,fill=blue!20}]  
\node[vertex] (n3) at (5.99999601961531,4.000000000007922) {$4$};
\draw[edge](n3) to[loop] (n3);
\end{tikzpicture} 
&
\color{white}
\begin{tikzpicture} \tikzset{edge/.style = {->,> = latex'}} \tikzset{vertex/.style = {shape=circle,draw,minimum size=1.5em}} [xscale=2, yscale=2, auto=left,every node/.style={circle,fill=blue!20}] 
\node[vertex,accepting,initial] (n0) at (11,5.5) {$0$};
\node[vertex,black] (n1) at (14,5.5) {$1$};
\draw[edge,black](n1) to[loop] node[pos=0.25,right] {$(4,4)$} (n1);
\end{tikzpicture} 
&
$\Gamma_1$
\\\hline

$C_2$ &
\begin{tikzpicture} \tikzset{edge/.style = {->,> = latex'}} \tikzset{vertex/.style = {shape=circle,draw,minimum size=1.5em}} [xscale=2, yscale=2, auto=left,every node/.style={circle,fill=blue!20}] 
\node[vertex] (n1) at (6.000001326794896,5.99999999999912) {$1$}; 
\node[vertex] (n2) at (5.00000000000352,5.000002653589793) {$3$}; 
\draw[edge, bend right=10](n1) to (n2); 
\draw[edge,bend right=10](n2) to (n1); 
\end{tikzpicture} 

&

\begin{tikzpicture} \tikzset{edge/.style = {->,> = latex'}} \tikzset{vertex/.style = {shape=circle,draw,minimum size=1.5em}} [xscale=2, yscale=2, auto=left,every node/.style={circle,fill=blue!20}] 
\node[vertex,accepting,initial] (n0) at (11,5.5) {$0$};
\node[vertex] (n1) at (14,5.5) {$1$};
\draw[edge, bend right=10] (n0) edge node[below] {$(1,3)$} (n1);
\draw[edge, bend right=10] (n1) edge node[above] {$(3,1)$} (n0);
\end{tikzpicture} 
&
$\Gamma_2$
\\\hline
\end{tabular}
\captionof{table}{The 1- and 2-cycles of the $(2,5)$-mother graph and their corresponding cycle images.}
\label{2_5_ci}
\end{scriptsize}
\end{center}
From Table \ref{2_5_ci}, we see that any $(2,5)$-palintiple string must involve at least one copy of the multiset $\{(1,3),(3,1)\},$ and possibly copies of $\{(4,4)\}$ or $\{(0,0)\}.$
We may now generate $(2,5)$-palintiple examples. Some examples without leading or trailing zeros include
\[
 \begin{array}{l}
(3,1)_5=2\cdot(1,3)_5,\\
(3,1,3,1)_5=2\cdot(1,3,1,3)_5,\\
(3,4,1)_5=2\cdot(1,4,3)_5,\\
(3,4,1,3,4,1)_5=2\cdot(1,4,3,1,4,3)_5,\\
(3,4,1,0,3,4,1)_5=2\cdot(1,4,3,0,1,4,3)_5,\\
(3,1,3,4,1)_5=2\cdot(1,3,1,4,3)_5,\\
\end{array}
\]
and so forth.
\end{example}

\begin{example}
We may find all $(4,10)$-palintiples of any possible length from the 1- and 2-cycles of the $(4,10)$-mother graph:
\[
\begin{array}{ll}
 \mbox{1-cycles:} & (0), (3), (6), (9)\\ 
 \mbox{2-cycles:} &  (0,2), (1,5), (1,7), (2,8), (3,5), (4,6), (4,8), (7,9).
\\ 
 \end{array}
\]
Examining the images of the above cycles in the same manner as in Examples \ref{two_four}, \ref{three_four}, and \ref{two_five}, any multiset union of cycles must involve at least one multiset copy of $\{(2,8),(8,2),(1,7),(7,1)\},$ and possibly copies of $\{(9,9)\}$ or $\{(0,0)\}.$
\end{example}

\begin{remark}
An equivalent approach to the above methods is to directly keep track of the multiset unions of mother-graph cycles by using the cycle images themselves, defining them as multigraphs; instead of assigning a collection of inputs to each edge, we may define a distinct edge, $(c_1,c_2),$ for each individual input, $(d_1,d_2),$ which satisfies Equation \ref{transition}. Permutiple strings may then be found from any strongly-connected multigraph union which contains the zero state.
\end{remark}

\section{Summary, Conclusions, and Future Lines of Inquiry}

The results and methods developed here produce all permutiples of a given base and multiplier without need of any prior knowledge, such as a known example. The first method uses the $(n,b)$-mother graph, $M,$ to find cycle combinations whose lengths add to a desired number of digits. These cycle combinations give candidates for collections of digits which can be narrowed down by Equation (\ref{digits_carries_2}). The methods of \cite{holt_4} may then be applied to any remaining digit collections. The second method compares the cycles of $M$ to corresponding cycle images, which are subgraphs of the $(n,b)$-Hoey-Sloane graph, $\Gamma.$ Cycle-image unions which are strongly connected and contain the zero state (allowing for $L$-walks) correspond to multiset unions of cycles of $M.$ These multisets may then be ordered into permutiple strings by using the cycle-image union, which is a subgraph of $\Gamma.$ While the first method completes the work of \cite{holt_3,holt_4}, the second method involves substantially less work. It also provides a broader perspective on notions encountered in previous efforts, such as permutiple conjugacy and palintiple numbers. Furthermore, the second method touches upon a line of questioning brought up by \cite{holt_3} involving the prospect of finding a generalization of Young graphs \cite{sloane} which would describe permutiples. Although the second method is not an obvious generalization of Young graphs, it certainly springs from the same soil as previous efforts for describing palintiple numbers \cite{hoey,kendrick_1,sloane,young_1,young_2}. 

Larger bases and multipliers present combinatorial challenges for both methods since the number of cycles of the mother graph grows rapidly with both the base and multiplier. In the case of $(4,10)$-permutiples, we would need either to develop more powerful methods, or develop software to find all possible strongly-connected unions containing the zero state from the 986 cycle images. Generally speaking, it is clear that there are plenty of ideas presented here which can be more fully developed.

Although we could describe the regular language $L$ by resorting to regular expressions, doing so for permutiple strings would be less straightforward since there are constraints on the multiplicities of certain inputs. Using standard regular-expression operations (Kleene star, alternation, etc.) would require that we write a separate regular expression for each strongly-connected cycle-image union containing the zero state, all while taking care to maintain the correct multiplicities of inputs. Moreover, for larger bases and multipliers (more states on $\Gamma$), the task of writing out a regular expression for $L$ quickly becomes onerous. The language which the $(4,10)$-Hoey-Sloane machine recognizes\footnote{``...for which I invite masochists to write the regular expression.''\cite{hoey}} is a case in point. Thus, ``multiset unions of mother-graph cycles which are ordered to form $L$-walks on the Hoey-Sloane graph'' is the simplest description of permutiple strings we have been able to produce. However, knowing that the collection of permutiple strings is closed under concatenation, it would not be surprising if there is a more elegant and compact description of them. 

Finally, the methods presented here may help us to more fully understand the phenomenon of ``derived permutiples'' mentioned in the concluding remarks of \cite{holt_3,holt_4}. We say that a base-$b$ permutiple, $(d_k,\cdots,d_0)_b,$ is {\it derived} if its carries, excluding $c_0,$ form a number, $(c_k,c_{k-1},\ldots,c_1)_n,$ which is a base-$n$ permutiple. As an example, the $(6,12)$-permutiple $(10,3,5,1,8,6)_{12}=6\cdot(1,8,6,10,3,5)_{12}$ has a carry sequence which is the digit sequence of a $(2,6)$-palintiple, $(4,3,5,1,2)_6=2 \cdot (2,1,5,3,4)_6.$ We suspect that broader classification schemes touched upon in Remark \ref{broad_class} may help us to better understand these connections. We leave this, as well as unresolved matters mentioned above, to future efforts.

\vskip 20pt\noindent {\bf Acknowledgements.} The author gratefully extends his thanks to the anonymous referee whose suggestions improved the flow and exposition of this work. The author also thanks his wife for her love and support during the writing of this paper.


\begin{thebibliography}{1}\footnotesize


\bibitem{guttman}
S. Guttman, On cyclic numbers, \textit{Amer. Math. Monthly} \textbf{41}(3) (1934), 159-166.

\bibitem{faber}
X. Faber and J. Grantham,  On integers whose sum is the reverse of their product,  \textit{Fibonacci Quart.} \textbf{61}(1) (2023), 28-41.

\bibitem{hoey}
D. J. Hoey, Untitled online article (not peer-reviewed). Available from OEIS Foundation Inc. (1992), \textit{The On-Line Encyclopedia of Integer Sequences}
 at \url{https://oeis.org/A008919/a008919.txt}.

\bibitem{holt_1}
B. V. Holt, Some general results and open questions on palintiple numbers, \textit{Integers} \textbf{14} (2014), \#A42.

\bibitem{holt_2}
B. V. Holt, Derived palintiple families and their palinomials, \textit{Integers} \textbf{16} (2016), \#A27.

\bibitem{holt_3}
B. V. Holt, On permutiples having a fixed set of digits, \textit{Integers} \textbf{17} (2017), \#A20.

\bibitem{holt_4}
B. V. Holt, A method for finding all permutiples with a fixed set of digits from a single known example, \textit{Integers} \textbf{24} (2024), \#A88.

\bibitem{kalman}
D. Kalman, Fractions with cycling digit patterns, \textit{College Math. J.} \textbf{27}(2) (1996), 109-115.

\bibitem{kendrick_1}
L. H. Kendrick, Young graphs: 1089 et al., \textit{J. Integer Seq.} \textbf{18} (2015), Article 15.9.7.

\bibitem{sloane}
N. J. A. Sloane, 2178 and all that, \textit{Fibonacci Quart.} \textbf{52}(2) (2014), 99-120.

\bibitem{qu}
B. Qu and S. J. Curran, An unexpected digit permutation from multiplying in any number base, in {\it Combinatorics, Graph Theory and Computing}, Springer, 2022.

\bibitem{young_1}
A. L. Young, $k$-reverse multiples, \textit{Fibonacci Quart.} \textbf{30} (1992), 126-132.

\bibitem{young_2}
A. L. Young, Trees for $k$-reverse multiples, \textit{Fibonacci Quart.}, \textbf{30} (1992), 166-174.

\end{thebibliography}
\end{document}